\newcommand{\A}{\mathfrak{A}}
\newcommand{\F}{\mathbb{F}}
\newcommand{\Q}{\mathbb{Q}}
\newcommand{\Z}{\mathbb{Z}}
\newcommand{\N}{\mathbb{N}}
\newcommand{\sym}{\mathfrak{S}}
\newcommand{\fq}{\mathfrak{q}}
\newcommand{\calC}{\mathcal{C}}
\newcommand{\calS}{\mathcal{S}}
\newcommand{\fp}{\mathfrak{p}}
\newcommand{\fP}{\mathfrak{P}}
\DeclareMathOperator{\Gal}{Gal}
\DeclareMathOperator{\Norm}{N}
\DeclareMathOperator{\Tr}{Tr}
\numberwithin{equation}{section}
\newtheorem{theorem}{Th\'eor\`eme}
\newtheorem{lemme}{Lemme}
\newtheorem{corollary}{Corollaire}
\newtheorem{proposition}{Proposition}
\theoremstyle{definition}
\newtheorem{question}[equation]{Question}
\theoremstyle{remark}
\newtheorem{remark}[equation]{Remarque}
\definecolor{darkgreen}{rgb}{0,0.5,0}
\newcommand \DB[2]{{#2}}
\newcommand{\Id}{\mathrm Id}
\begin{document}

\title{Id\'eaux premiers totalement d\'ecompos\'es   et sommes de Newton }
%\subjclass[2010]{Primary 11G30; Secondary 14G25, 14G40, 14K15, 14K20}

\author{Dominique Bernardi et Alain Kraus}

\begin{abstract}  Soient $K$ un corps de nombres et $f\in K[X]$ un polyn\^ome irr\'eductible unitaire   \`a  coef\-ficients 
dans l'anneau d'entiers $O_K$ de $K$. On se propose d'expliciter un crit\`ere effectif, en termes  du groupe de Galois de $f$ sur $K$ et d'une suite r\'ecurrente lin\'eaire associ\'ee \`a $f$,   
permettant parfois  de caract\'eriser les id\'eaux premiers  de $O_K$ modulo lesquels  $f$ est  totalement d\'ecompos\'e. 
Si $\alpha$ est une racine de $f$, ce  crit\`ere fournit donc une caract\'erisation   des id\'eaux premiers de $O_K$ qui sont totalement d\'ecompos\'es dans $K(\alpha)$.  Il s'applique en particulier si le  degr\'e de $f$ est au moins $4$ et le groupe de Galois de $f$ est le groupe sym\'etrique  ou le groupe altern\'e. 
\end{abstract}
\bigskip

\date{\today}

\address{Sorbonne Universit\'e,
Institut de Math\'ematiques de Jussieu - Paris Rive Gauche,
UMR 7586 CNRS - Paris Diderot,
4 Place Jussieu, 75005 Paris, 
France}
\email{alain.kraus@imj-prg.fr}
\email{dominique.bernardi@imj-prg.fr}

\renewcommand{\keywordsname}{Mots-cl\'es}
\keywords{Corps de nombres, groupes de Galois, id\'eaux premiers,  suites r\'ecurrentes lin\'eaires, corps de classes.}

\makeatletter
\@namedef{subjclassname@1991}{MSC 2010}
\makeatother
\subjclass{11B83 - 11R32 - 11R04 ; 11R37 - 11Y40.}

\maketitle

\section{Introduction}   
Soit $K$ un corps de nombres\DB{ contenu dans une cl\^oture alg\'ebrique $\overline \Q$ de $\Q$}{}. Notons $O_K$ son anneau d'entiers.  Soit $f\in O_K[X]$ un polyn\^ome unitaire irr\'eductible sur $K$\DB{}{, dont le degr\'e sera not\'e $k$}. On se pr\' eoccupe dans cet article de la question suivante (cf. \cite{Weinstein} Question  A et  \cite{Dalawat} p. 27, dans le cas o\`u $K=\Q$) :

\begin{question}  \label{Q:question1} Comment caract\'eriser les id\'eaux premiers $\fp$ de $O_K$  tels que le polyn\^ome $f$ soit totalement d\'ecompos\'e  modulo $\fp$ i.e. que $f$ modulo $\fp$ soit s\'eparable et ait toutes ses racines dans $O_K/\fp$ ?
\end{question}

Une r\'eponse attendue consiste \`a expliciter  une caract\'erisation, ind\'ependante de tout  algorithme de factorisation de $f$ modulo $\fp$, permettant d'identifier ces id\'eaux premiers. 
On obtient alors, ce que l'on appelle parfois,  une loi de r\'eciprocit\'e pour $f$ (cf. \cite{Weinstein} p. 1, \cite{Dalawat} p. 32 et \cite{Wyman} p. 585).

Par exemple,  si  $K=\Q$,  une caract\'erisation   est la suivante.  Pour tout nombre premier $p$,  notons $N_p(f)$ le nombre de racines de $f$ modulo $p$. Soit $x$ la classe de $X$ dans la $\Z$-alg\`ebre $\Z[X]/(p,f)$.  Comme cons\'equence du th\'eor\`eme chinois, si $p$ ne divise pas le discriminant de $f$, on a l'\'equivalence
\begin{equation}
\label{(1.2)}
N_p(f)=k\  \   \Leftrightarrow \  x^p=x.
\end{equation}

La question~\ref{Q:question1} peut se reformuler comme suit (\cite{Weinstein}, Question B, p. 6) : 
soit $L/K$ une extension finie.  Comment  caract\'eriser les id\'eaux premiers de $O_K$ qui sont totalement d\'ecompos\'es dans $L$ ?
Si l'extension $L/K$ est ab\'elienne,  la th\'eorie du corps de classes fournit une r\'eponse  compl\`ete \`a cette question (voir par exemple \cite{Neu}).  Dans le cas g\'en\'eral, une approche  de ces questions  repose sur l'\'etude des repr\'esentations galoisiennes de degr\'e au moins $2$ du groupe de Galois $\Gal(\overline \Q/K)$.  
On pourra trouver dans  \cite{Weinstein}  l'\'etat des  connaissances sur ces questions.

Notre approche ici est beaucoup plus basique. Soit  $\Gal(f)$ le groupe de Galois de $f$ sur $K$. Pour tout id\'eal premier non nul $\fp$ de $O_K$  notons  $\Norm(\fp)$ le cardinal de $O_K/\fp$.
On    associe \`a $f$ la  suite r\'ecurrente lin\'eaire $(T_n)_{n\in \N}$ d'\'el\'ements de $O_K$,  dont le terme  $T_n$ est  la $n$-i\`eme somme de Newton des racines de $f$ dans $\overline {\DB\Q K}$.  On obtient un crit\`ere portant sur  $\Gal(f)$, impliquant le fait que 
pour tout id\'eal premier $\fp$ de $O_K$, sauf peut-\^etre un nombre fini,   $f$ est  totalement d\'ecompos\'e  modulo $\fp$ si et seulement si   $T_{\Norm(\fp)+1}-T_2$ appartient \`a $\fp$  (th\'eor\`eme~\ref{T:thm1}). 

\DB{Notons $k$}{Rappelons que $k$ est} le degr\'e de $f$. Si on a $k\geq 4$,   ce crit\`ere s'applique notamment si $\Gal(f)$ 
est isomorphe au groupe sym\'etrique $\sym_k$ ou \`a son sous-groupe altern\'e $\A_k$ (th\'eor\`eme~\ref{T:thm2}). Dans le cas o\`u   $\Gal(f)$  n'est pas isomorphe \`a $\sym_k$ ni $\A_k$, il importe dans l'utilisation du th\'eor\`eme~\ref{T:thm1} de d\'ecrire l'action de $\Gal(f)$ sur les racines de $f$. 
Le th\'eor\`eme~\ref{T:thm3} fournit un crit\`ere  qui permet  parfois de se dispenser de cette description. 

\' A titre indicatif, si $K=\Q$ et $f=X^5-X-1$, dont le groupe de Galois est $\sym_5$,  on obtient l'\'enonc\'e suivant (proposition~\ref{P:prop4}, assertion 1). Soit $(T_n)_{n\in \N}$ la suite d\'efinie par les \'egalit\'es
$$T_0=5,\quad T_1=T_2=T_3=0,\quad T_4=4  \quad \text{et} \quad T_{n+5}=T_{n+1}+T_n \quad \text{pour tout}\ n\in \N.$$
Alors, si $p\not\in \lbrace 2,5\rbrace$, on a $N_p(f)=5$ si et seulement si $T_{p+1}\equiv 0\pmod p$.
 
Par ailleurs, soit $H_K$ le corps de classes de Hilbert de $K$, qui est l'extension ab\'elienne  non ramifi\'ee maximale de $K$. D'apr\`es la  th\'eorie du corps de classes, pour qu'un id\'eal premier  de $O_K$ soit  principal il faut et il suffit qu'il soit  totalement d\'ecompos\'e dans $H_K$ (\cite{Neu}, (8.5) Corollary, p. 107). Supposons connu    le polyn\^ome minimal $f$ d'un \'el\'ement primitif  entier  de l'extension $H_K/K$. Si $(T_n)_{n\in \N}$ est la suite r\'ecurrente lin\'eaire associ\'ee \`a $f$ comme pr\'ec\'edemment, 
les r\'esultats obtenus  permettent alors  de caract\'eriser les id\'eaux premiers $\fp$ de $O_K$ qui sont principaux, en terme de la diff\'erence $T_{\Norm(\fp)+1}-T_2$. 
On pourra trouver deux illustrations    num\'eriques \`a ce sujet dans le paragraphe~\ref{S:par9}.

Tous les calculs num\'eriques que ce travail a n\'ecessit\'es ont \'et\'e effectu\'es \`a l'aide des logiciels de calculs {\tt Pari-gp} (\cite{Pari}) et {\tt Magma} (\cite{MAGMA}).

\section{\'Enonc\' es des r\'esultats}   
Soit $K$ un corps de nombres d'anneau d'entiers $O_K$. Soient  $k\geq 2$ un entier  et 
$$f=X^k-a_{k-1}X^{k-1}-a_{k-2}X^{k-2}-\cdots -a_1X-a_0\in O_K[X]$$
un polyn\^ome irr\'eductible sur $K$\DB{, de degr\'e $k$  \`a coefficients dans $O_K$}{}. 
Choisissons une num\'erotation  $\alpha_1,\cdots,\alpha_k$  des racines de $f$ dans $\overline \Q$. Posons  $G=\Gal(f)$ le groupe de Galois de $f$ sur $K$ i.e. 
le groupe de Galois de l'extension $K(\alpha_1,\cdots,\alpha_k)/K$.
On identifiera dans toute la suite $G$ avec le sous-groupe du groupe sym\'etrique $\sym_k$ associ\'e \`a l'action de $G$ sur
les racines de $f$. Avec cette identification, on a donc pour tout $\sigma\in G$ et tout $i\in \lbrace1,\cdots,k\rbrace$  l'\'egalit\'e
$$\sigma(\alpha_i)=\alpha_{\sigma(i)}.$$

\subsection{La suite \texorpdfstring{$(T_n)_{n\in \N}$}{Tn}} 
%On note  $(T_n)_{n\in \N}$   la suite d'\'el\'ements $O_K$ d\'efinie par les \'egalit\'es
%\begin{equation}
%\label{(2.2)}
%T_j=\sum_{i=1}^k \alpha_i^j \quad  \text{pour} \quad 0\leq j\leq k-1,
% \end{equation}
% \begin{equation}
% \label{(2.3)}
%T_{n+k}=a_{k-1}T_{n+k-1}+a_{k-2}T_{n+k-2}+\cdots +a_1T_{n+1}+a_0T_n\quad  \text{pour tout}\ n\in \N .
% \end{equation}
%Pour tout $n\in \N$ on a donc (cf. \cite{Ros},  \'egalit\'e (1) p. 304), 
%\begin{equation}
%\label{(2.4)}
%T_n=\sum_{i=1}^k \alpha_i^n.
% \end{equation}
%Par d\'efinition, $T_n$ est  la $n$-i\`eme somme de Newton des racines de  $f$.

%DB
Pour tout entier $n\in\N$, on note
\begin{equation}
 \label{(2.1)}
T_n=\sum_{i=1}^k\alpha_i^n
\end{equation}
la $n$-i\`eme somme de Newton des racines de $f$. La suite $T=(T_n)_{n\in\N}$ est d\'etermin\'ee par ses $k$ premiers termes et v\'erifie la r\'ecurrence lin\'eaire
$$T_{n+k}=a_{k-1}T_{n+k-1}+a_{k-2}T_{n+k-2}+\cdots +a_1T_{n+1}+a_0T_n$$
pour $n\in\N$. Tous ses termes appartiennent \`a $O_K$.

\subsection{L'entier  \texorpdfstring{$B_f$\DB{de $O_K$}{}}{Bf}} Pour tout $\sigma\in G$, \DB{distinct de l'identit\'e, posons}{notons}
\begin{equation}
 \label{(2.2)}
c(\sigma)=\sum_{i=1}^k \alpha_i\alpha_{\sigma(i)}-T_2.
 \end{equation}
Posons \DB{par ailleurs}{alors}
\begin{equation}
 \label{(2.3)}
B_f= \prod_{\sigma\in G, \sigma\neq \Id} c(\sigma).
\end{equation}

\begin{lemme} \label{L:lemme1}
L'\'el\'ement $B_f$ appartient \`a $O_K$. 
\end{lemme}

  \begin{proof} Soit $\tau$ un \'el\'ement de $G$. Pour tout $\sigma\in G$, $\sigma\neq\Id$, on a  
  $\tau \left(c(\sigma)\right)=c(\tau \sigma \tau^{-1})$, d'o\`u l'\'egalit\'e
 $$\tau(B_f)=\prod_{\sigma\in G, \sigma\neq\Id} c(\tau \sigma \tau^{-1}).$$
 L'application qui \`a $\sigma$ associe $\tau\sigma\tau^{-1}$ est une \DB{bijection de $G\backslash \lbrace\Id\rbrace$}{permutation de $G\setminus\{\Id\}$}. Il en r\'esulte  que   $B_f$ est fix\'e par $\tau$, d'o\`u  le r\'esultat.
 \end{proof}

\subsection{Le crit\`ere de d\'ecomposition} Soit $\Delta_f$ le discriminant de $f$. Pour tout id\'eal premier non nul $\fp$ de $O_K$ d\'esignons par $\Norm(\fp)$ le cardinal de $O_K/\fp$ i.e. la norme de $K$ sur $\Q$ de $\fp$.  Notons $N_{\fp}(f)$ le nombre de racines de $f$ modulo $\fp$.

\begin{theorem} \label{T:thm1}   Soit $\fp$ un id\'eal premier de $O_K$. Supposons que l'on ait
\begin{equation}
   \label{(2.4)}
\Delta_f B_f\not\equiv 0 \pmod {\fp}.
\end{equation}
 Alors, on a l'\'equivalence
 \begin{equation}
\label{(2.5)}
N_{\fp}(f)=k \   \Leftrightarrow \  T_{\Norm(\fp)+1}\equiv T_2\pmod {\fp}.
\end{equation}
  \end{theorem}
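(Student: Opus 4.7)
Pour démontrer ce théorème, on se propose d'interpréter le critère via l'élément de Frobenius. La condition $\Delta_f\not\equiv 0\pmod\fp$ entraîne que $f$ est séparable modulo $\fp$ et que $\fp$ est non ramifié dans le corps de décomposition $L=K(\alpha_1,\ldots,\alpha_k)$ de $f$. On choisit alors un idéal premier $\fP$ de $O_L$ au-dessus de $\fp$ et on note $\sigma\in G$ le Frobenius associé, caractérisé par la congruence $\sigma(x)\equiv x^{\Norm(\fp)}\pmod\fP$ pour tout $x\in O_L$. On dispose ainsi, pour tout indice $i$, de
$$\alpha_i^{\Norm(\fp)}\equiv\alpha_{\sigma(i)}\pmod\fP.$$

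L'étape centrale consistera à multiplier par $\alpha_i$ puis à sommer sur $i$, ce qui donne
$$T_{\Norm(\fp)+1}=\sum_{i=1}^k\alpha_i\cdot\alpha_i^{\Norm(\fp)}\equiv\sum_{i=1}^k\alpha_i\alpha_{\sigma(i)}\pmod\fP,$$
puis à retrancher $T_2=\sum_{i=1}^k\alpha_i^2$ pour obtenir la congruence clé
$$T_{\Norm(\fp)+1}-T_2\equiv c(\sigma)\pmod\fP,$$
avec la convention $c(\Id)=0$. Comme le membre de gauche appartient à $O_K$, il sera nul modulo $\fp$ si et seulement s'il est nul modulo $\fP$.

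Il restera alors à relier la condition $N_\fp(f)=k$ à la condition $\sigma=\Id$. Puisque $L/K$ est galoisienne et contient toutes les racines de $f$, l'égalité $N_\fp(f)=k$ équivaut à la décomposition totale de $\fp$ dans $L$, c'est-à-dire à $\sigma=\Id$ ; dans ce cas $c(\sigma)=0$ et la congruence $T_{\Norm(\fp)+1}\equiv T_2\pmod\fp$ est automatiquement satisfaite. Réciproquement, si $\sigma\neq\Id$, l'hypothèse~\eqref{(2.4)} donne $B_f\not\equiv 0\pmod\fP$, et comme $c(\sigma)$ figure parmi les facteurs du produit définissant $B_f$, on en déduira $c(\sigma)\not\equiv 0\pmod\fP$, d'où $T_{\Norm(\fp)+1}-T_2\not\equiv 0\pmod\fp$. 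Le seul point technique à surveiller sera l'indépendance du raisonnement vis-à-vis du choix de $\fP$ au-dessus de $\fp$ : changer $\fP$ revient à conjuguer $\sigma$ par un élément $\tau\in G$, et les propriétés utilisées restent stables grâce à la relation $\tau(c(\sigma))=c(\tau\sigma\tau^{-1})$ déjà observée dans la preuve du lemme~\ref{L:lemme1}.
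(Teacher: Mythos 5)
Votre d\'emonstration est correcte, et pour le sens difficile (la congruence $T_{\Norm(\fp)+1}\equiv T_2\pmod{\fp}$ entra\^ine $N_{\fp}(f)=k$) elle suit exactement la m\^eme route que l'article : choix d'un id\'eal premier $\fP$ au-dessus de $\fp$, substitution de Frobenius $\sigma$, congruence $T_{\Norm(\fp)+1}-T_2\equiv c(\sigma)\pmod{\fP}$, puis l'hypoth\`ese~\eqref{(2.4)}, jointe au fait que $c(\sigma)$ est un facteur du produit d\'efinissant $B_f$, pour exclure le cas $\sigma\neq\Id$. La diff\'erence r\'eelle porte sur le sens direct. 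L'article le traite par un argument \'el\'ementaire (lemme~\ref{L:lemme2}) : si $N_{\fp}(f)=k$, on \'ecrit $f\equiv\prod_i(X-\beta_i)\pmod{\fp}$ avec $\beta_i\in O_K$, et les sommes de Newton $V_n$ des $\beta_i$ v\'erifient $T_n\equiv V_n\pmod{\fp}$ et $V_{\Norm(\fp)+1}\equiv V_2\pmod{\fp}$, car $x^{\Norm(\fp)+1}\equiv x^2\pmod{\fp}$ pour tout $x\in O_K$ ; cette implication vaut donc pour tout id\'eal premier $\fp$, sans aucune hypoth\`ese sur $\Delta_f$ ni sur $B_f$, ce que l'article souligne explicitement. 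Vous la d\'eduisez au contraire de l'\'equivalence entre $N_{\fp}(f)=k$ et $\sigma=\Id$, c'est-\`a-dire de la correspondance entre le type de factorisation de $f$ modulo $\fp$ et la structure en cycles du Frobenius. Cette \'equivalence est vraie sous l'hypoth\`ese $\Delta_f\not\equiv 0\pmod{\fp}$, disponible ici, donc votre preuve est compl\`ete \`a condition de justifier (ou de citer) sa direction $N_{\fp}(f)=k\Rightarrow\sigma=\Id$, que l'article n'a justement pas besoin d'invoquer : il ne cite \cite{Cohen}, Prop.~2.3.9, que pour la r\'eciproque ($\sigma=\Id$ entra\^ine $N_{\fp}(f)=k$). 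Votre r\'edaction y gagne en uniformit\'e, tout passant par la dichotomie sur $\sigma$ ; elle y perd le caract\`ere inconditionnel de l'implication directe et repose sur un ingr\'edient un peu plus fort que n\'ecessaire. Pour combler ce point sans r\'ef\'erence externe, on peut proc\'eder ainsi : les r\'eductions $\bar{\alpha}_i\in O_L/\fP$ sont deux \`a deux distinctes puisque $\fp$ ne divise pas $\Delta_f$ ; si $N_{\fp}(f)=k$, ces $k$ r\'eductions sont n\'ecessairement les $k$ racines de $f$ dans $O_K/\fp$, donc elles sont fix\'ees par $x\mapsto x^{\Norm(\fp)}$, et la congruence $\alpha_{\sigma(i)}\equiv\alpha_i^{\Norm(\fp)}\equiv\alpha_i\pmod{\fP}$ force alors $\sigma(i)=i$ pour tout $i$.
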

 \smallskip
Si  $B_f$ n'est pas nul, on obtient ainsi une loi de r\'eciprocit\'e pour $f$. Tel est le cas dans la situation  suivante.
\smallskip
\begin{theorem} \label{T:thm2} Supposons $k\geq 4$ et $G=\sym_k$ ou $G=\A_k$. Alors, on a $B_f \neq 0$.
 \end{theorem}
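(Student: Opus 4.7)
The plan is to show that $c(\sigma)\neq 0$ for every $\sigma\in G\setminus\{\Id\}$, which gives $B_f\neq 0$ by definition. Throughout I work in the splitting field $L=K(\alpha_1,\ldots,\alpha_k)$, exploiting that $f$ is separable, so $\alpha_i\neq\alpha_j$ whenever $i\neq j$.

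First I would establish a convenient rewriting. Because $\sigma$ permutes the indices, $\sum_i\alpha_{\sigma(i)}^2=T_2$, and expanding the square gives the identity
\[
c(\sigma)=-\frac12\sum_{i=1}^k\bigl(\alpha_i-\alpha_{\sigma(i)}\bigr)^2,
\]
in which only the indices $i\in\supp(\sigma)$ contribute. Next I would note that $c$ is Galois-equivariant: every $\tau\in G$ satisfies $\tau(\alpha_i)=\alpha_{\tau(i)}$, hence $\tau(c(\sigma))=c(\tau\sigma\tau^{-1})$. In particular the set $\{\sigma:c(\sigma)=0\}$ is a union of $G$-conjugacy classes.

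The central device would be a \emph{swap argument}. Suppose $c(\sigma)=0$ for some $\sigma\neq\Id$; pick $i\in\supp(\sigma)$ and a fixed point $l$ of $\sigma$, and set $\tau=(i\,l)$ when $G=\sym_k$, or $\tau=(i\,l)(a\,b)$ with $a,b\notin\{i,l\}$ two further fixed points of $\sigma$ when $G=\A_k$ (possible as soon as $|\supp(\sigma)|\leq k-3$). The conjugate $\sigma':=\tau\sigma\tau^{-1}$ differs from $\sigma$ only by the substitution of $l$ for $i$ in its cycle, and a direct computation using the factorisation $x^2-y^2=(x-y)(x+y)$ on the two arc-terms at $i$ gives
\[
c(\sigma)-c(\sigma')=-(\alpha_i-\alpha_l)\bigl(\alpha_i+\alpha_l-\alpha_{\sigma(i)}-\alpha_{\sigma^{-1}(i)}\bigr).
\]
Both sides vanish by Galois-equivariance and $\alpha_i\neq\alpha_l$, whence $\alpha_i+\alpha_l=\alpha_{\sigma(i)}+\alpha_{\sigma^{-1}(i)}$, an equality whose right-hand side does not depend on $l$. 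As soon as $\sigma$ has two fixed points usable in the argument, choosing $l\neq l'$ forces $\alpha_l=\alpha_{l'}$, contradicting separability.

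The conjugacy classes in which $\sigma$ has too few fixed points for the swap argument would be treated by direct variants. A transposition $\sigma=(i\,j)\in\sym_k$ gives $c(\sigma)=-(\alpha_i-\alpha_j)^2\neq 0$ at once. A double transposition $\sigma=(ij)(lm)$ is compared to its conjugate $\sigma'=(il)(jm)$ (in the same $G$-class); an analogous computation yields $c(\sigma)-c(\sigma')=2(\alpha_j-\alpha_l)(\alpha_i-\alpha_m)$, which is non-zero. For derangements and near-derangements (e.g.\ $k$-cycles in $\sym_k$, 5-cycles in $\A_5$) one performs the swap \emph{within} $\supp(\sigma)$, conjugating by an involution supported in $\supp(\sigma)$ and simplifying $c(\sigma)-c(\tau\sigma\tau^{-1})$ to an expression of the form $(\alpha_r-\alpha_s)\cdot(\text{linear})$, again forcing an equality $\alpha_r=\alpha_s$. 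The main obstacle is the bookkeeping needed to check that these swap arguments cover every cycle type in $\sym_k$ and $\A_k$ for all $k\geq 4$; particular care is required at $k=4$, where the $\A_4$-classes of $3$-cycles split into two classes of four elements and where double transpositions have full support, so that only the internal conjugate-pair trick is available. A systematic enumeration of cycle types handles these small-$k$ cases uniformly.
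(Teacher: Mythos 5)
Your underlying mechanism is exactly the paper's: $c$ is Galois-equivariant, so $c(\sigma)=0$ forces $c(\tau\sigma\tau^{-1})=0$ for every $\tau\in G$, and one contradicts separability by subtracting the two vanishing quantities. Within that framework, the parts you make explicit are correct: the identity $c(\sigma)=-\frac12\sum_i(\alpha_i-\alpha_{\sigma(i)})^2$, the fixed-point swap formula (I checked it, including when $i$ lies in a $2$-cycle), and the transposition and double-transposition computations all hold, and for $G=\sym_k$ the plan is completable; for instance swapping two adjacent entries $r,\sigma(r)$ of a cycle of length $\geq 4$ gives $c(\sigma)-c(\tau\sigma\tau^{-1})=-\bigl(\alpha_{\sigma(r)}-\alpha_r\bigr)\bigl(\alpha_{\sigma^{-1}(r)}-\alpha_{\sigma^{2}(r)}\bigr)\neq 0$, an immediate contradiction, and cross-cycle swaps give linear relations that combine.

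The genuine gap is the case $G=\A_k$ with at most two fixed points, which is precisely what your catch-all paragraph must cover, and the technical claim it rests on is false there. In $\A_k$ every involution is a product of at least two disjoint transpositions, and conjugating by such an element perturbs the cycle structure at four or more places; the difference $c(\sigma)-c(\tau\sigma\tau^{-1})$ is then in general a sum of two products $(\alpha_r-\alpha_s)L_1+(\alpha_t-\alpha_u)L_2$, not a single product $(\alpha_r-\alpha_s)\cdot(\text{linear})$. Concretely, take $\sigma=(1\,2\,3\,4\,5)$ in $\A_5$ (no fixed points, so every conjugator is supported in $\supp(\sigma)$): the reflection-type involutions such as $(2\,5)(3\,4)$ send $\sigma$ to $\sigma^{-1}$ and give no information, since $c(\sigma^{-1})=c(\sigma)$ identically; all the others, e.g.\ $\tau=(1\,2)(3\,4)$, give $c(\sigma)-c(\tau\sigma\tau^{-1})=-\bigl(\alpha_5(\alpha_2+\alpha_3-\alpha_1-\alpha_4)+\alpha_1\alpha_4-\alpha_2\alpha_3\bigr)$, whose associated symmetric matrix has rank $4$, so it is not a product of two linear forms and no equality of roots can be read off. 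Worse, for a $3$-cycle in $\A_4$ or $\A_5$ there is \emph{no} involution supported in $\supp(\sigma)$ at all (a $3$-element support only carries transpositions, which are odd). Nor is this a finite list of small-$k$ exceptions that enumeration can absorb: $k$-cycles in $\A_k$ for every odd $k$, $(k-1)$-cycles for every even $k$, and products of two long cycles all have at most one fixed point, so infinitely many classes escape your swap argument. The paper resolves exactly these cases by conjugating with $3$-cycles, which always lie in $\A_k$, at the price of a two-step argument: one conjugation yields a quadratic relation (its equation \eqref{(4.8)}), applying a further $3$-cycle to that relation and subtracting yields a linear relation such as $2\alpha_{i_1}=\alpha_{i_4}+\alpha_{i_p}$, and one last application forces two roots to coincide (lemmes \ref{L:lemme3}--\ref{L:lemme5}). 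If you replace your ``involution inside the support'' device by this $3$-cycle device, your proof closes; as written, it does not.
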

 
 \begin{remark} L'\'enonc\'e pr\'ec\'edent n'est pas valable pour  $k=3$. Dans ce cas,  on a $B_f\neq 0$ si et seulement si  $a_2^2+3a_1\neq 0$  (voir la d\'emonstration de la proposition~\ref{P:prop2}).   
De m\^eme, si  on a $k\geq 4$ et si $G$ n'est pas le groupe  $\sym_k$ ni le groupe $\A_k$, $B_f$ peut  \^etre nul. Par exemple, pour 
$f=X^4+X^3+X^2+X+1\in \Q[X],$
on a $G=\Z/4\Z\subseteq \sym_4$ et  $B_f=0$.  Cela \'etant,  pour ce polyn\^ome, il est bien connu que l'on a $N_p(f)=4$ si et  seulement si $5$ divise $p-1$. 
\end{remark} 

\subsection{Sur la factorisation de \texorpdfstring{$B_f$}{Bf}} La norme de $K$ sur $\Q$ de $B_f$ est un entier rationnel dont le nombre de chiffres d\'ecimaux est souvent tr\`es grand, notamment si $k\geq 6$. 
Il est alors tr\`es difficile et g\'en\'eralement prohibitif  de d\'eterminer ses diviseurs premiers. Dans certains cas, on peut n\'eanmoins y parvenir en remarquant que $B_f$ admet une factorisation qui est reli\'ee aux classes de conjugaison \DB{non triviales de $G$ i.e. distinctes de la classe de l'identit\'e}{de $G$}. Plus pr\'ecis\'ement, 
pour toute classe de conjugaison non triviale $\calC$  de $G$, posons 
\begin{equation}
 \label{(2.7)}
B_{\calC}= \prod_{\sigma\in \calC} c(\sigma).
\end{equation}
Pour tous $\tau\in G$ et $\sigma\in \calC$, l'\'egalit\'e $\tau \left(c(\sigma)\right)=c(\tau \sigma \tau^{-1})$, entra\^ine que $B_{\calC}$ est fix\'e par $\tau$, d'o\`u 
\begin{equation}
 \label{(2.8)}
B_{\calC}\in O_K.
\end{equation}
L'\'egalit\'e
\begin{equation}
\label{(2.9)}
B_f=\prod_{\calC} B_{\calC},
\end{equation}
o\`u $\calC$ parcourt l'ensemble des classe de conjugaison non triviales  de $G$, fournit alors une factorisation de $B_f$. Par ailleurs,  l'\'enonc\'e suivant facilite parfois 
cette factorisation.

\begin{proposition} \label{P:prop1}  Soient $\calC$ une classe de conjugaison non triviale  de $G$ qui n'est pas form\'ee d'\'el\'ements d'ordre $2$  et $\calC'$ la classe de conjugaison form\'ee des inverses des \'el\'ements de $\calC$.
\begin{itemize}
\item[1)]  Si \DB{pour tout $\sigma\in \calC$, $\sigma^{-1}$ est  dans $\calC$}{$\calC=\calC'$}, alors $B_{\calC}$ est un carr\'e dans $O_K$. 
\item[2)] \DB{Supposons qu'il existe $\sigma\in \calC$ tel que $\sigma^{-1}$ ne soit pas dans $\calC$. Soit $\calC'$ la classe de conjugaison de $\sigma^{-1}$
. Alors,}{Si $\calC\neq\calC'$, alors} on a $B_{\calC}=B_{\calC'}$. 
\end{itemize}
\end{proposition}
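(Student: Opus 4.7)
Mon plan est d'abord d'\'etablir l'identit\'e cl\'e $c(\sigma)=c(\sigma^{-1})$, valable pour tout $\sigma\in G$. Elle s'obtient en r\'eindexant la somme de \eqref{(2.2)} par la substitution $j=\sigma(i)$, qui \'echange les r\^oles de $\sigma$ et $\sigma^{-1}$ sans modifier la valeur de la somme. Les deux assertions d\'ecouleront de cette identit\'e.

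L'assertion 2 est alors imm\'ediate. Sous l'hypoth\`ese $\calC\neq\calC'$, l'application $\sigma\mapsto\sigma^{-1}$ est une bijection de $\calC$ sur $\calC'$, et l'identit\'e cl\'e donne
\[
B_{\calC'}=\prod_{\sigma'\in\calC'}c(\sigma')=\prod_{\sigma\in\calC}c(\sigma^{-1})=\prod_{\sigma\in\calC}c(\sigma)=B_{\calC}.
\]

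Pour l'assertion 1, l'hypoth\`ese selon laquelle $\calC$ n'est pas form\'ee d'\'el\'ements d'ordre $2$ signifie, l'ordre \'etant un invariant de classe, qu'aucun \'el\'ement de $\calC$ n'est \'egal \`a son inverse. L'involution $\sigma\mapsto\sigma^{-1}$ partage alors $\calC$ en une r\'eunion disjointe de paires $\{\sigma,\sigma^{-1}\}$, et on pose
\[
D_{\calC}=\prod_{\{\sigma,\sigma^{-1}\}} c(\sigma),
\]
le produit portant sur ces paires. L'identit\'e cl\'e garantit que cette quantit\'e ne d\'epend pas du choix d'un repr\'esentant dans chaque paire, et en regroupant les facteurs de $B_{\calC}$ on obtient $B_{\calC}=D_{\calC}^{2}$.

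L'\'etape principale, et la seule d\'elicate, consiste \`a montrer que $D_{\calC}$ appartient \`a $O_K$. \'Etant un produit d'entiers alg\'ebriques, $D_{\calC}$ est lui-m\^eme un entier alg\'ebrique ; il suffit donc de v\'erifier qu'il est fix\'e par tout $\tau\in G$. L'action par conjugaison $\sigma\mapsto\tau\sigma\tau^{-1}$ pr\'eserve $\calC$ et commute \`a l'inversion, donc permute l'ensemble des paires non ordonn\'ees $\{\sigma,\sigma^{-1}\}$. Combin\'ee \`a l'identit\'e $\tau(c(\sigma))=c(\tau\sigma\tau^{-1})$ d\'ej\`a utilis\'ee dans la d\'emonstration du lemme~\ref{L:lemme1}, ceci montre que l'action de $\tau$ sur les facteurs de $D_{\calC}$ se r\'eduit \`a une permutation, d'o\`u $\tau(D_{\calC})=D_{\calC}$. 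Le point d\'elicat est pr\'ecis\'ement que le produit porte sur des paires non ordonn\'ees plut\^ot que sur $\calC$ tout entier ; c'est l'identit\'e $c(\sigma)=c(\sigma^{-1})$ qui rend ce choix d'indexation compatible avec l'action galoisienne.
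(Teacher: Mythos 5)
Votre preuve est correcte et suit essentiellement la m\^eme d\'emarche que celle du texte : l'identit\'e $c(\sigma)=c(\sigma^{-1})$, le regroupement de $\calC$ en paires $\lbrace\sigma,\sigma^{-1}\rbrace$ dont un produit de repr\'esentants $D_{\calC}$ (le $x$ du texte) v\'erifie $B_{\calC}=D_{\calC}^2$, l'invariance de $D_{\calC}$ sous $G$ via $\tau(c(\sigma))=c(\tau\sigma\tau^{-1})$ et la permutation des paires, puis pour l'assertion 2 la bijection $\sigma\mapsto\sigma^{-1}$ de $\calC$ sur $\calC'$. Rien \`a signaler, sinon qu'il serait bon de mentionner explicitement que l'hypoth\`ese $\calC=\calC'$ est ce qui garantit que l'involution $\sigma\mapsto\sigma^{-1}$ stabilise $\calC$ et permet le d\'ecoupage en paires.
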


\begin{corollary} Le produit  des $B_{\calC}$, o\`u $\calC$ parcourt l'ensemble des classes de conjugaison non triviales de $G$ qui ne sont pas form\'ees d'\'el\'ements d'ordre 2, est un carr\'e dans $O_K$. 
\end{corollary}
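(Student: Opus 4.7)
The plan is to apply Proposition~\ref{P:prop1} class by class, after grouping the conjugacy classes under the involution $\calC \mapsto \calC'$. First I would observe that this involution preserves the set $\mathcal{S}$ of non-trivial conjugacy classes of $G$ not formed of elements of order $2$: since an element of $\calC'$ is the inverse of an element of $\calC$, it has the same order, so $\calC \in \mathcal{S}$ if and only if $\calC' \in \mathcal{S}$. This observation is the only point in the argument that really uses anything beyond the preceding proposition.

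Accordingly, $\mathcal{S}$ partitions into fixed points of the involution (classes with $\calC = \calC'$) and unordered pairs $\{\calC, \calC'\}$ with $\calC \neq \calC'$. I would then split the product $\prod_{\calC \in \mathcal{S}} B_\calC$ according to this partition. For each fixed $\calC$, assertion 1 of Proposition~\ref{P:prop1} states that $B_\calC$ is already a square in $O_K$. For each orbit $\{\calC, \calC'\}$ of size two, assertion 2 gives $B_{\calC'} = B_\calC$, so the contribution $B_\calC B_{\calC'} = B_\calC^2$ is visibly a square. Multiplying over all orbits, I obtain the product in question as a product of squares in $O_K$, hence itself a square, which is exactly the claim.

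I do not foresee any real obstacle: the proof is essentially bookkeeping organised by the involution $\calC \mapsto \calC'$. The only step worth stating explicitly is that this involution stabilises $\mathcal{S}$, which follows from the elementary fact that $\sigma$ and $\sigma^{-1}$ have the same order; everything else is an immediate consequence of Proposition~\ref{P:prop1}.
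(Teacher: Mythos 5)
Your proof is correct and follows exactly the route the paper intends: the paper's own proof is the one-line remark that the corollary is a direct consequence of Proposition~\ref{P:prop1}, and your argument is precisely the bookkeeping that remark leaves implicit (the involution $\calC \mapsto \calC'$ stabilises the relevant set of classes, fixed classes give squares by assertion 1, and paired classes give $B_{\calC}B_{\calC'} = B_{\calC}^2$ with $B_{\calC} \in O_K$ by assertion 2 and \eqref{(2.8)}). Nothing further is needed.
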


\begin{proof} C'est une cons\'equence directe de la proposition~\ref{P:prop1}.
\end{proof}

\subsection{Une propri\'et\'e de divisibilit\'e  de \texorpdfstring{$B_f$}{Bf}} 

Si $G$ n'est pas $\sym_k$ ni $\A_k$, le calcul de $B_f$ n\'ecessite a priori la description explicite de l'action de $G$ sur les racines de $f$.
On peut parfois s'en affranchir en proc\'edant comme suit. 
 
Pour toute classe de conjugaison $\calS$ non triviale de $\sym_k$, consid\'erons le polyn\^ome $F_{\calS}\in \Z[X_1,\cdots,X_k]$ d\'efini par 
\begin{equation} 
\label{(2.10)}
F_{\calS}=\prod_{t\in \calS} \left(\sum_{i=1}^k X_iX_{t(i)}-N_2\right) \quad  \text{o\`u}\quad  N_2=\sum_{i=1}^k X_i^2.
\end{equation}
 On v\'erifie que $F_{\calS}$ est invariant sous l'action naturelle de $\sym_k$ sur $\Z[X_1,\cdots,X_k]$ (lemme~\ref{L:lemme6}). Par suite, $F_S$ est un polyn\^ome \`a coefficients dans $\Z$ en les  polyn\^omes  sym\'etriques \'el\'ementaires de $\Z[X_1,\cdots,X_k]$.  Il en r\'esulte que 
\begin{equation}
\label{(2.11)}
F_{\calS}(\alpha_1,\cdots, \alpha_k)\in O_K.
\end{equation}

\begin{theorem} \label{T:thm3}    On a dans $O_K$ la congruence
\begin{equation}
\label{(2.12)}
\prod_{\calS} F_{\calS}(\alpha_1,\cdots, \alpha_k)\equiv 0 \pmod{B_f},
\end{equation}
o\`u $\calS$ parcourt l'ensemble des classes de conjugaison non triviales de 
$\sym_k$. En particulier,
on a l'implication 
\begin{equation}
\label{(2.13)}
\prod_{\calS} F_{\calS}(\alpha_1,\cdots, \alpha_k) \neq 0  \   \Rightarrow\  B_f\neq 0.
\end{equation}
Pour tout   id\'eal premier $\fp$ de $O_K$ tel que 
\begin{equation}
\label{(2.14)}
 \prod_{\calS} F_{\calS}(\alpha_1,\cdots,\alpha_k)\not\equiv 0 \pmod {\fp},
 \end{equation}
on a l'\'equivalence
\begin{equation}
\label{(2.15)}
N_{\fp}(f)=k \   \Leftrightarrow \  T_{\Norm(\fp)+1}\equiv T_2\pmod {\fp}.
\end{equation}
\end{theorem}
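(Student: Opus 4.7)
The plan is to extend the definition~(\ref{(2.2)}) of $c(\sigma)$ from $G\setminus\{\Id\}$ to all of $\sym_k\setminus\{\Id\}$ by the very same formula. With this extension, the evaluation of $N_2$ at $(\alpha_1,\ldots,\alpha_k)$ is exactly $T_2$, so for every non-trivial conjugacy class $\calS$ of $\sym_k$ one has
\[
F_{\calS}(\alpha_1,\ldots,\alpha_k)=\prod_{t\in\calS}c(t).
\]
Taking the product over all such $\calS$ and partitioning $\sym_k\setminus\{\Id\}$ as $(G\setminus\{\Id\})\sqcup(\sym_k\setminus G)$, one obtains
\[
\prod_{\calS}F_{\calS}(\alpha_1,\ldots,\alpha_k)=B_f\cdot Q,\qquad Q:=\prod_{t\in\sym_k\setminus G}c(t).
\]
The divisibility~(\ref{(2.12)}) thus reduces to showing that $Q$ belongs to $O_K$.

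To prove $Q\in O_K$, I would repeat the Galois-theoretic argument from the proof of Lemma~\ref{L:lemme1}: for any $\tau\in G$, the identity $\tau(c(t))=c(\tau t\tau^{-1})$ remains valid for every $t\in\sym_k$, and since $\tau G\tau^{-1}=G$, conjugation by $\tau$ permutes $\sym_k\setminus G$. Hence $Q$ is fixed by $G=\Gal(K(\alpha_1,\ldots,\alpha_k)/K)$, so $Q\in K$. As $f$ is monic with coefficients in $O_K$, each $\alpha_i$ is an algebraic integer, whence each $c(t)$, and in turn $Q$, is integral. An algebraic integer lying in $K$ belongs to $O_K$, which establishes~(\ref{(2.12)}); implication~(\ref{(2.13)}) follows at once.

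For the final assertion, given a prime $\fp$ satisfying~(\ref{(2.14)}), it suffices to verify hypothesis~(\ref{(2.4)}) of Theorem~\ref{T:thm1}. That $B_f\not\equiv 0\pmod{\fp}$ is immediate from~(\ref{(2.12)}). For the discriminant, I would single out the conjugacy class $\calS_0$ of transpositions: the short computation
\[
c\bigl((i\,j)\bigr)=2\alpha_i\alpha_j+\sum_{\ell\neq i,j}\alpha_\ell^{2}-T_2=-(\alpha_i-\alpha_j)^{2}
\]
gives $F_{\calS_0}(\alpha_1,\ldots,\alpha_k)=(-1)^{\binom{k}{2}}\Delta_f$, so $\Delta_f$ too divides $\prod_{\calS}F_{\calS}(\alpha_1,\ldots,\alpha_k)$, and~(\ref{(2.14)}) forces $\Delta_f\not\equiv 0\pmod{\fp}$. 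Theorem~\ref{T:thm1} then delivers~(\ref{(2.15)}). The only non-routine ingredient in the whole argument is the integrality of $Q$, which is dispatched uniformly by Galois invariance together with the monic hypothesis on $f$; everything else amounts to bookkeeping with conjugacy classes.
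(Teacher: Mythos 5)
Your proof is correct and follows essentially the same route as the paper: you factor $\prod_{\calS}F_{\calS}(\alpha_1,\ldots,\alpha_k)=\prod_{\sigma\in\sym_k,\,\sigma\neq\Id}c(\sigma)$ as $B_f$ times the complementary product over $\sym_k\setminus G$, prove that complementary factor lies in $O_K$ via the identity $\tau\left(c(t)\right)=c\left(\tau t\tau^{-1}\right)$ plus integrality of the $\alpha_i$, and conclude with Theorem~\ref{T:thm1} together with the transposition computation $F_{\calS_0}(\alpha_1,\ldots,\alpha_k)=(-1)^{k(k-1)/2}\Delta_f$, which is exactly the paper's Lemma~\ref{L:lemme7}. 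The only cosmetic difference is that the paper reaches integrality through the $\sym_k$-invariance of the polynomials $F_{\calS}$ (Lemma~\ref{L:lemme6} and Corollaire~\ref{C:cor2}) and leaves the quotient step implicit, whereas you apply the same Galois-invariance argument directly to the factor $Q$, making that step explicit.
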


Si l'on parvient \`a d\'eterminer les diviseurs premiers du produit des  $F_{\calS}(\alpha_1,\cdots, \alpha_k)$, on obtient alors une loi de r\'eciprocit\'e pour $f$
sans calculer $B_f$ explicitement.
 
 \subsection{Cons\'equences} Pr\'ecisons les \'enonc\'es pr\'ec\'edents dans quelques cas particuliers. 

\begin{proposition} \label{P:prop2} Soit $\fp$ un id\'eal premier de $O_K$. 

\begin{itemize}
\item[1)] Si $k=2$ et $\Delta_f\not\equiv 0 \pmod{\fp}$,
on a 
$$N_{\fp}(f)=2 \   \Leftrightarrow \  T_{\Norm(\fp)+1}\equiv a_1^2+2a_0\pmod {\fp}.$$ 
\item[2)] Si $k=3$ et 
$(a_2^2+3a_1)\Delta_f\not\equiv 0 \pmod{\fp}$, on a 
$$N_{\fp}(f)=3 \   \Leftrightarrow \  T_{\Norm(\fp)+1}\equiv a_2^2+2a_1\pmod {\fp}.$$ 
\end{itemize}
\end{proposition}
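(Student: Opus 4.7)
L'approche consiste à appliquer directement le théorème~\ref{T:thm1}. Pour chacun des deux cas ($k=2$ et $k=3$), le plan est de calculer explicitement $T_2$ et $B_f$ en fonction des coefficients $a_i$ et du discriminant $\Delta_f$, puis de vérifier que la condition $\Delta_f B_f\not\equiv 0\pmod{\fp}$ se ramène exactement à la condition hypothétique énoncée.

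Pour le cas $k=2$, les relations de Newton donnent immédiatement $T_2 = (\alpha_1+\alpha_2)^2 - 2\alpha_1\alpha_2 = a_1^2 + 2a_0$. Puisque $f$ est irréductible, $G=\sym_2=\{\Id, (1\,2)\}$, et un calcul direct livre
$$c\bigl((1\,2)\bigr) = 2\alpha_1\alpha_2 - T_2 = -(\alpha_1-\alpha_2)^2 = -\Delta_f,$$
d'où $B_f = -\Delta_f$ et $\Delta_f B_f = -\Delta_f^2$. La condition de non-annulation modulo $\fp$ se réduit alors à $\Delta_f\not\equiv 0\pmod{\fp}$, et l'assertion 1) résulte du théorème~\ref{T:thm1}.

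Pour le cas $k=3$, les mêmes relations donnent $T_2 = a_2^2 + 2a_1$. Le groupe $G$ est isomorphe à $\A_3$ ou à $\sym_3$, et il faut évaluer $c(\sigma)$ selon le type de $\sigma$. Pour une transposition $(i\,j)$ fixant un indice, on retrouve comme en degré $2$ que $c(\sigma) = -(\alpha_i - \alpha_j)^2$. Pour un $3$-cycle $\sigma$, l'identité
$$\sum_i\alpha_i\alpha_{\sigma(i)} = \alpha_1\alpha_2+\alpha_2\alpha_3+\alpha_1\alpha_3 = -a_1$$
donne $c(\sigma) = -a_1 - T_2 = -(a_2^2+3a_1)$. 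En regroupant les contributions, on trouve $B_f = (a_2^2+3a_1)^2$ si $G=\A_3$, et $B_f = -(a_2^2+3a_1)^2\Delta_f$ si $G=\sym_3$, le produit des $-(\alpha_i-\alpha_j)^2$ sur les trois transpositions valant $-\Delta_f$. Dans les deux cas, $\Delta_f B_f$ est, au signe près, de la forme $(a_2^2+3a_1)^2\Delta_f^{s}$ avec $s\in\{1,2\}$, de sorte que $\Delta_f B_f \not\equiv 0\pmod{\fp}$ équivaut à $(a_2^2+3a_1)\Delta_f\not\equiv 0\pmod{\fp}$. L'assertion 2) se déduit alors du théorème~\ref{T:thm1}.

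Le seul point méritant un peu d'attention est l'apparition du facteur supplémentaire $a_2^2+3a_1$ provenant des $3$-cycles, déjà signalée dans la remarque suivant le théorème~\ref{T:thm2}~; le reste se résume à des identités symétriques élémentaires et à la distinction de cas sur $G$.
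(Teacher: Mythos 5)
Your proof is correct, and for assertion 2) it takes a genuinely different route from the paper's. For assertion 1) the two arguments coincide: compute $T_2=a_1^2+2a_0$ and $B_f=-\Delta_f$, then invoke le th\'eor\`eme~\ref{T:thm1}. For assertion 2), the paper never computes $B_f$: it invokes le th\'eor\`eme~\ref{T:thm3}, whose hypothesis involves the $\sym_3$-invariants of the paragraph~\ref{S:par7}, and combines le lemme~\ref{L:lemme7} with l'\'egalit\'e~\eqref{(7.2)} to get $R_{\calS_0}R_{\calS_1}=-\Delta_f(a_2^2+3a_1)^2$; that theorem is precisely designed to dispense with knowing whether $G$ is $\A_3$ or $\sym_3$. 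You instead work with le th\'eor\`eme~\ref{T:thm1} directly and determine $B_f$ exactly in each case: $c(\sigma)=-(\alpha_i-\alpha_j)^2$ for a transposition and $c(\sigma)=-(a_2^2+3a_1)$ for a $3$-cycle, whence $B_f=(a_2^2+3a_1)^2$ if $G=\A_3$ and $B_f=-(a_2^2+3a_1)^2\Delta_f$ if $G=\sym_3$; since $O_K/\fp$ is an integral domain, in both cases $\Delta_fB_f\not\equiv 0\pmod{\fp}$ is equivalent to the stated hypothesis $(a_2^2+3a_1)\Delta_f\not\equiv 0\pmod{\fp}$. All these computations check out. What each approach buys: yours is more elementary and more precise --- it pins down $B_f$ itself, and in passing justifies the claim in the remark following le th\'eor\`eme~\ref{T:thm2} that for $k=3$ one has $B_f\neq 0$ if and only if $a_2^2+3a_1\neq 0$ --- whereas the paper's use of le th\'eor\`eme~\ref{T:thm3} avoids any case analysis on $G$ and illustrates the general mechanism (invariants of the full symmetric group) that it exploits for polynomials of higher degree, where enumerating the possibilities for $G$ would be impractical.
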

\smallskip

\begin{proposition} \label{P:prop3} Soit $\fp$ un id\'eal premier de $O_K$. Supposons que $f$ soit un  trin\^ome de la forme 
$$f=X^4-a_1X-a_0.$$
Si   $2a_1a_0\Delta_f\not\equiv 0 \pmod{\fp}$, on a 
$$N_{\fp}(f)=4 \   \Leftrightarrow \  T_{\Norm(\fp)+1}\equiv 0\pmod {\fp}.$$ 
\end{proposition}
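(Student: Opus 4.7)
Le plan est d'appliquer directement le th\'eor\`eme~\ref{T:thm3}. Comme $f = X^4 - a_1 X - a_0$ est un trin\^ome, on a $a_3 = a_2 = 0$, donc $e_1 = 0$ et $e_2 = 0$ pour les fonctions sym\'etriques \'el\'ementaires des racines ; les identit\'es de Newton entra\^inent alors $T_2 = e_1^2 - 2 e_2 = 0$. La congruence conclusive du th\'eor\`eme~\ref{T:thm3} prend donc la forme $T_{\Norm(\fp)+1} \equiv 0 \pmod \fp$ souhait\'ee. Il me reste \`a \'etablir que l'hypoth\`ese $2 a_1 a_0 \Delta_f \not\equiv 0 \pmod \fp$ entra\^ine $\prod_{\calS} F_{\calS}(\alpha_1,\ldots,\alpha_4) \not\equiv 0 \pmod \fp$, o\`u $\calS$ parcourt les quatre classes de conjugaison non triviales de $\sym_4$ : transpositions, doubles transpositions, $3$-cycles et $4$-cycles. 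Pour cela je calculerais explicitement chaque contribution en fonction de $a_0, a_1$ et $\Delta_f$.

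Trois de ces calculs sont essentiellement routiniers, une fois exploit\'ees les relations $e_1 = e_2 = 0$. Pour les transpositions, un calcul direct donne $c((ij)) = -(\alpha_i - \alpha_j)^2$, d'o\`u une contribution \'egale \`a $\Delta_f$. Pour les $3$-cycles, les relations $e_1 = e_2 = 0$ permettent de simplifier $c(\sigma) = 2\alpha_m^2$, o\`u $m$ d\'esigne le point fixe de $\sigma$, et le produit sur les huit trois-cycles vaut $2^8 e_4^4 = 256\, a_0^4$. Pour les doubles transpositions, les valeurs $c(\sigma)/2$ ne sont autres que les trois racines de la cubique r\'esolvante de $f$, qui se calcule ici $Y^3 + 4a_0 Y - a_1^2$ ; leur produit vaut donc $a_1^2$, ce qui donne la contribution $8 a_1^2$.

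L'\'etape \`a la fois la plus originale et le principal obstacle concerne les $4$-cycles. Pour $\sigma = (i\,j\,k\,\ell)$, un calcul direct donne $c(\sigma) = (\alpha_i + \alpha_k)(\alpha_j + \alpha_\ell)$ ; en utilisant $\alpha_j + \alpha_\ell = -(\alpha_i + \alpha_k)$ gr\^ace \`a $e_1 = 0$, on obtient $c(\sigma) = -(\alpha_i + \alpha_k)^2$. Les six $4$-cycles se regroupent en trois paires $\{\sigma, \sigma^{-1}\}$ sur lesquelles $c$ est constant, de sorte que la contribution se ram\`ene au calcul de $\prod_{r=2}^{4}(\alpha_1 + \alpha_r)$. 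L'astuce cruciale est d'\'evaluer l'\'egalit\'e $\prod_{i=1}^{4}(Y + \alpha_i) = f(-Y) = Y^4 + a_1 Y - a_0$ en $Y = \alpha_1$, puis d'utiliser $f(\alpha_1) = 0$ pour obtenir $2\alpha_1 \prod_{r \neq 1}(\alpha_1 + \alpha_r) = 2 a_1 \alpha_1$, d'o\`u $\prod_{r \neq 1}(\alpha_1 + \alpha_r) = a_1$ apr\`es division par $2\alpha_1 \neq 0$ (licite car $a_0 \neq 0$). La contribution des $4$-cycles vaut ainsi $a_1^4$. En r\'eunissant les quatre facteurs, on trouve $\prod_{\calS} F_{\calS}(\alpha_1,\ldots,\alpha_4) = 2^{11}\, a_0^4\, a_1^6\, \Delta_f$, et l'hypoth\`ese garantit que cette quantit\'e est non nulle modulo $\fp$. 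Le th\'eor\`eme~\ref{T:thm3} fournit alors l'\'equivalence voulue.
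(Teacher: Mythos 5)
Your proof is correct, and at the top level it follows the same strategy as the paper: apply Theorem~\ref{T:thm3} with $T_2=0$, after showing that the product $\prod_{\calS}F_{\calS}(\alpha_1,\cdots,\alpha_4)$ equals $2^{11}a_0^4a_1^6\Delta_f$, which the hypothesis $2a_1a_0\Delta_f\not\equiv 0\pmod{\fp}$ makes invertible modulo $\fp$. Where you genuinely differ is in how this product is evaluated. The paper's proof is a three-line specialization: it plugs $e_1=e_2=0$, $e_3=a_1$, $e_4=-a_0$ into the general degree-$4$ formulas \eqref{(7.3)}--\eqref{(7.5)} of paragraph~\ref{S:par7} (which were verified with {\tt Magma}), together with Lemma~\ref{L:lemme7} for the transpositions, obtaining $R_{\calS_0}R_{\calS_1}R_{\calS_2}R_{\calS_3}=2^{11}\Delta_f e_3^6e_4^4$. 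You instead compute each conjugacy class by hand, exploiting the trinomial shape: $c(\sigma)=2\alpha_m^2$ for a $3$-cycle with fixed point $m$ (product $2^8e_4^4=256\,a_0^4$); identification of the values $c(\sigma)/2$ on double transpositions with the roots of the resolvent cubic $Y^3+4a_0Y-a_1^2$ (product $8a_1^2$); and, for the $4$-cycles, the evaluation of $\prod_{i}(Y+\alpha_i)=f(-Y)$ at $Y=\alpha_1$ combined with $f(\alpha_1)=0$ (product $a_1^4$). All four computations check out, including the division by $2\alpha_1$, which is legitimate since $a_0\neq 0$ (irreducibility of $f$). What your route buys is a self-contained, computer-free verification, valid precisely because the trinomial hypothesis kills $e_1$ and $e_2$ from the start; what the paper's route buys is reuse of the general identities of paragraph~\ref{S:par7}, which are needed again elsewhere (for instance \eqref{(7.5)} reappears in the proof of Lemma~\ref{L:lemme8}), at the cost of relying on symbolic computation to establish them.
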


En particulier, si $K=\Q$ et $f=X^4-X-1$,   pour tout nombre premier $p\neq 283$, on a 
$$N_p(f)=4 \   \Leftrightarrow \  T_{p+1}\equiv 0\pmod p.$$

Selmer a d\'emontr\'e que pour tout $n\geq 2$ le groupe de Galois sur $\Q$ du polyn\^ome $X^n-X-1\in \Q[X]$  est le groupe sym\'etrique $\sym_n$ (\cite{Selmer}). Dans le cas o\`u 
 $n\in \lbrace5,6,7\rbrace$, on a 
l'\'enonc\'e suivant.
 
 \smallskip
\begin{proposition} \label{P:prop4}  Supposons $K=\Q$. Soit $p$ un nombre premier. 

\begin{itemize} 

\item[1)] Si $f=X^5-X-1$ et  $p\not\in \lbrace 2,5\rbrace$, on a 
$$N_{p}(f)=5 \   \Leftrightarrow \  T_{p+1}\equiv 0\pmod p.$$
\item[2)]  Si $f=X^6-X-1$ et  $p\not\in \lbrace 2,3,7,13,1663 \rbrace$, on a 
$$N_{p}(f)=6 \   \Leftrightarrow \  T_{p+1}\equiv 0\pmod p.$$
\item[3)]  Si $f=X^7-X-1$ et  $p\not\in \lbrace 2,3, 7, 15961129   \rbrace$, on a 
$$N_{p}(f)=7 \   \Leftrightarrow \  T_{p+1}\equiv 0\pmod p.$$

\end{itemize}
\end{proposition}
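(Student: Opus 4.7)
The plan is to apply Theorem~\ref{T:thm1} together with Theorem~\ref{T:thm2} and Selmer's theorem~\cite{Selmer}, which asserts that $\Gal(f) = \sym_n$ for $f = X^n-X-1$. Since $n \geq 5 \geq 4$, Theorem~\ref{T:thm2} gives $B_f \neq 0$, and Theorem~\ref{T:thm1} then furnishes a reciprocity law valid for every rational prime $p$ coprime to $\Delta_f B_f$.

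First I would verify that $T_2 = 0$ in each of the three cases, so that the right-hand side of~(\ref{(2.5)}) becomes simply $T_{p+1} \equiv 0 \pmod p$. Writing $f = \prod_{i=1}^n (X-\alpha_i)$ and matching coefficients shows that the elementary symmetric functions $e_1,\ldots,e_{n-2}$ all vanish, since the coefficients of $X^{n-1},\ldots,X^2$ in $f$ are zero; Newton's identities then give $T_1 = e_1 = 0$ and $T_2 = e_1 T_1 - 2e_2 = 0$.

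Next I would compute $\Delta_f B_f$ explicitly in each case and factor it into primes by machine computation. The decomposition~(\ref{(2.9)}) of $B_f$ over the conjugacy classes of $\sym_n$, aided by Proposition~\ref{P:prop1}, reduces the task to a manageable list of symmetric expressions $B_{\calC}$; each $B_{\calC}$ is a rational integer computable from the power sums $T_m$ of the roots, which are themselves obtained via the recurrence $T_{m+n} = T_{m+1} + T_m$ derived from $\alpha_i^n = \alpha_i + 1$.

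Finally, for every prime $p$ dividing $\Delta_f B_f$ but absent from the announced exclusion set, I would check the equivalence by a direct computation. If $p \mid \Delta_f$, then $f$ has a multiple root modulo $p$ so $N_p(f) < n$, and it suffices to iterate the recurrence modulo~$p$ to confirm that $T_{p+1} \not\equiv 0 \pmod p$, making both sides of~(\ref{(2.5)}) false. If $p \nmid \Delta_f$ but $p \mid B_f$, one factors $f$ modulo~$p$ to determine $N_p(f)$ and computes $T_{p+1} \pmod p$ separately. Primes at which the two sides actually disagree---for example $p \in \{2,5\}$ in case~(1)---must be retained in the exclusion list.

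The main obstacle is purely computational: for $n = 7$, $B_f$ is the product of $7! - 1 = 5039$ algebraic-integer factors $c(\sigma)$ and its norm is very large, so exhibiting its complete prime factorisation is delicate. The appearance of the isolated primes $1663$ for $n = 6$ and $15\,961\,129$ for $n = 7$ in the exclusion lists strongly suggests that they arise as divisors of $B_f$ at which the equivalence genuinely breaks down, and identifying them is exactly what requires the full factorisation.
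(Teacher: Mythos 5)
Your proposal follows essentially the same route as the paper: invoke Selmer's theorem and Theorem~\ref{T:thm2} to get $B_f\neq 0$, note $T_2=0$, apply Theorem~\ref{T:thm1} away from $\Delta_f B_f$, factor $B_f$ by machine (using the conjugacy-class decomposition~\eqref{(2.9)} when direct factorisation is infeasible, which the paper needs precisely for $n=7$), and then test each prime divisor of $\Delta_f B_f$ individually via~\eqref{(1.2)} and the recurrence, retaining in the exclusion set exactly the primes where the equivalence fails (indeed $15961129$ gives $N_p(f)=1$ with $T_{p+1}\equiv 0\pmod p$). The plan and its justification are correct; what remains is exactly the computational work the paper carries out.
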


\begin{remark} En ce qui concerne  le polyn\^ome $f=X^8-X-1$, si $\calC$ est la classe de conjugaison des $8$-cycles dans $\sym_8$, la racine carr\'ee de   $B_{\calC}$   est un entier  poss\'edant 830 chiffres d\'ecimaux. Il semble 
difficile d'obtenir   sa factorisation  compl\`ete en produit de nombres premiers. 
\end{remark} 
\smallskip

\begin{proposition} \label{P:prop5} Supposons $K=\Q$ et $f=X^5-2X^4+2X^3-X^2+1$. Alors, le groupe de Galois  de $f$ est   di\'edral   d'ordre $10$.
Pour tout nombre premier $p\not\in \lbrace 3,47\rbrace$, on a  
$$N_{p}(f)=5 \   \Leftrightarrow \  T_{p+1}\equiv 0\pmod p.$$
\end{proposition}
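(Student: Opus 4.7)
La démarche consiste à appliquer le Théorème~\ref{T:thm1}, ce qui demande d'abord de déterminer $\Gal(f)$, puis de vérifier que $T_2 = 0$, et enfin de montrer que les diviseurs premiers de $\Delta_f B_f$ sont exactement $3$ et $47$.

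Le polynôme $f$ étant irréductible de degré $5$ sur $\Q$, $\Gal(f)$ est un sous-groupe transitif de $\sym_5$. On vérifie à l'aide de {\tt Pari-gp}, ou par l'examen d'une résolvante de degré $6$ convenable, que $\Gal(f)$ est isomorphe au groupe diédral d'ordre $10$, que l'on notera $D_5$. Les identités de Newton appliquées à $f$ donnent ensuite $e_1 = 2$, $e_2 = 2$, $T_1 = 2$, puis $T_2 = e_1 T_1 - 2 e_2 = 0$. Ainsi le Théorème~\ref{T:thm1} fournit exactement la congruence $T_{p+1} \equiv 0 \pmod p$ de l'énoncé, dès lors que $p \nmid \Delta_f B_f$.

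Le groupe $D_5 \subset \sym_5$ possède trois classes de conjugaison non triviales : la classe des cinq réflexions (de type de cycle $(2,2,1)$), et les deux classes $\{r, r^{-1}\}$ et $\{r^2, r^{-2}\}$ de $5$-cycles. Par la Proposition~\ref{P:prop1}(1), $B_{\{r, r^{-1}\}}$ et $B_{\{r^2, r^{-2}\}}$ sont des carrés dans $\Z$. En explicitant l'action de $D_5$ sur les racines de $f$, puis en utilisant (\ref{(2.2)}), (\ref{(2.7)}) et (\ref{(2.9)}), on calcule chaque $B_\calC$ à l'aide de {\tt Pari-gp}, on en déduit $B_f$, et l'on factorise $\Delta_f B_f$. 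Le point clé — et principal obstacle pratique — de la démonstration est la vérification que cette factorisation conduit exactement aux diviseurs premiers $3$ et $47$ ; les entiers $B_\calC$ peuvent être de très grande taille, mais l'extraction préalable des racines carrées permise par la Proposition~\ref{P:prop1} rend le calcul accessible. Le Théorème~\ref{T:thm1} livre alors l'équivalence annoncée pour tout nombre premier $p \notin \{3, 47\}$.
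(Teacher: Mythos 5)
Votre d\'emonstration est correcte, mais elle emprunte une route r\'eellement diff\'erente de celle du texte. Vous explicitez l'action de $D_5$ sur les racines pour calculer $B_f$ lui-m\^eme, puis vous concluez par le th\'eor\`eme~\ref{T:thm1} ; la v\'erification que vous pr\'esentez comme le point cl\'e aboutit bien. En effet, en num\'erotant les racines selon la structure pentagonale correcte (la conjugaison complexe est la r\'eflexion fixant la racine r\'eelle), les deux classes de rotations donnent les valeurs $c(r)=c(r^{-1})=-1$ et $c(r^2)=c(r^{-2})=3$ --- en accord avec la relation $c(r)+c(r^2)=e_2-2T_2=2$ ---, et le produit des $c(\sigma)$ sur les cinq r\'eflexions vaut $47$ ; d'o\`u $B_f=3^2\cdot 47$, puis $\Delta_f B_f=3^2\cdot 47^3$, et le th\'eor\`eme~\ref{T:thm1} donne l'\'equivalence pour tout $p\notin\lbrace 3,47\rbrace$ sans aucune v\'erification suppl\'ementaire. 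Le texte proc\`ede autrement : pour \'eviter pr\'ecis\'ement d'avoir \`a expliciter l'action galoisienne --- c'est la raison d'\^etre du th\'eor\`eme~\ref{T:thm3} ---, il calcule le produit des $F_{\calS}(\alpha_1,\cdots,\alpha_5)$ sur les classes de conjugaison non triviales de $\sym_5$, qui vaut $-3^2\cdot5^{10}\cdot11^4\cdot13^4\cdot19^2\cdot23^3\cdot41^2\cdot47^3\cdot281^2$ ; cela borne seulement les diviseurs premiers de $B_f$ par l'ensemble $\lbrace 3,5,11,13,19,23,41,47,281\rbrace$, et l'obtention de l'\'enonc\'e avec la seule exclusion de $3$ et $47$ demande encore de v\'erifier l'\'equivalence, nombre premier par nombre premier, pour $5$, $11$, $13$, $19$, $23$, $41$ et $281$. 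Votre approche obtient donc un ensemble exceptionnel exactement \'egal \`a $\lbrace 3,47\rbrace$ d\`es le th\'eor\`eme~\ref{T:thm1}, au prix de la d\'etermination de l'ordre cyclique des racines sur le pentagone (savoir laquelle des deux paires de racines complexes conjugu\'ees est adjacente \`a la racine r\'eelle) ; celle du texte se dispense de cette d\'etermination, au prix de v\'erifications r\'esiduelles pour sept nombres premiers. Remarque mineure : dans cet exemple, la proposition~\ref{P:prop1} ne joue qu'un r\^ole de contr\^ole (les $B_{\calC}$ des deux classes de $5$-cycles sont les carr\'es $1$ et $9$), car les entiers en jeu sont petits et leur factorisation n'est pas un obstacle.
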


Terminons ce paragraphe avec l'exemple suivant. 

\begin{proposition} \label{P:prop6}  Supposons $K=\Q(\alpha)$ avec $\alpha^3-\alpha-1=0$. Posons 
$$u=-\alpha^2+\alpha+2 \quad \text{et}\quad f=X^5+u^3X+u\in  K[X].$$
Alors, $f$ est irr\'eductible sur $K$ et $\Gal(f)=\sym_5$. Soient $\fp_5$ et $\fp_{61}$ les id\'eaux premiers de $O_K$  au-dessus de $5$ et $61$  qui sont de degr\'e r\'esiduel $1$. Pour tout id\'eal premier $\fp$ de $O_K$,  distinct de $\fp_5$ et $\fp_{61}$,  on a 
\begin{equation}
\label{(2.17)}
N_{\fp}(f)=5 \   \Leftrightarrow \ T_{\Norm(\fp)+1}\equiv 0\pmod {\fp}.
\end{equation}
\end{proposition}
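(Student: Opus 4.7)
The plan is to check the four independent parts of the proposition in turn: irreducibility of $f$, the determination $\Gal(f)=\sym_5$, the identification of the exceptional primes, and the reduction of the criterion to the congruence $T_{\Norm(\fp)+1}\equiv 0\pmod\fp$. First I would compute $\Norm_{K/\Q}(u)$. Expressing multiplication by $u=-\alpha^2+\alpha+2$ on the basis $(1,\alpha,\alpha^2)$ of $K$ as a $3\times 3$ matrix using $\alpha^3=\alpha+1$, a direct determinant calculation gives $\Norm_{K/\Q}(u)=5$. Since $5$ is prime in $\Z$, the principal ideal $(u)\subset O_K$ is itself a prime ideal of residue degree $1$ above $5$, so $(u)=\fp_5$. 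The polynomial $f=X^5+u^3X+u$ has all non-leading coefficients in $\fp_5$ and constant term $u\notin\fp_5^2$, so Eisenstein's criterion at $\fp_5$ shows that $f$ is irreducible over the completion $K_{\fp_5}$, hence over $K$.

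For the Galois group, since $f$ is irreducible of prime degree $5$, $G=\Gal(f)$ is a transitive subgroup of $\sym_5$. A classical theorem asserts that a transitive subgroup of $\sym_p$, $p$ prime, that contains a transposition must equal $\sym_p$. It therefore suffices to exhibit a prime $\fq$ of $O_K$, with $\fq\nmid\Delta_f$, such that $f\bmod\fq$ decomposes as the product of one irreducible quadratic factor and three distinct linear factors; the Frobenius at such a $\fq$ is then a transposition in $G$. Such a $\fq$ is readily located by a finite search, establishing $G=\sym_5$.

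With $G=\sym_5$ and $k=5\geq 4$, Th\'eor\`eme~\ref{T:thm2} gives $B_f\neq 0$, and Th\'eor\`eme~\ref{T:thm1} then yields, for every prime $\fp$ with $\Delta_f B_f\notin\fp$, the equivalence $N_{\fp}(f)=5 \Leftrightarrow T_{\Norm(\fp)+1}\equiv T_2\pmod\fp$. Applying Newton's identities to $f$, whose three intermediate coefficients vanish, gives at once $T_1=T_2=T_3=0$, so the right-hand side reduces to $T_{\Norm(\fp)+1}\equiv 0\pmod\fp$. It remains to list the prime divisors of $\Delta_f B_f$. The trinomial discriminant formula gives $\Delta_f=u^4(256u^{11}+3125)$, while formula~(\ref{(2.9)}) writes $B_f=\prod_\calC B_\calC$, where $\calC$ runs over the six nontrivial conjugacy classes of $\sym_5$. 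Each $B_\calC\in O_K$ is computed by expanding its defining symmetric polynomial in terms of the elementary symmetric functions read off from the coefficients of $f$. The required assertion is that, after reduction in $O_K=\Z[\alpha]$ and factorization of the $\Q$-norms, the prime ideals dividing the principal ideal $(\Delta_f B_f)$ are exactly $\fp_5$ and $\fp_{61}$.

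The main obstacle is this final factorization: the symbolic expansion of the six factors $B_\calC$ is substantial, and the norms of the resulting elements of $O_K$ are large integers whose complete prime factorizations must be certified in order to rule out any exceptional prime beyond those above $5$ and $61$. The corollary following Proposition~\ref{P:prop1} (square-factorization coming from inverse-closed conjugacy classes) roughly halves the amount of work. In practice this factorization is delegated to {\tt Pari-gp} or {\tt Magma}; once it is obtained, Th\'eor\`eme~\ref{T:thm1} yields the equivalence~(\ref{(2.17)}) for every $\fp\notin\{\fp_5,\fp_{61}\}$.
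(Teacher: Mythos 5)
Your first three steps coincide with the paper's own argument: the computation $\Norm_{K/\Q}(u)=5$, so that $(u)=\fp_5$ and Eisenstein's criterion at $\fp_5$ gives irreducibility; a Frobenius element that is (or powers to) a transposition, giving $\Gal(f)=\sym_5$ (the paper uses the inert prime $13$, modulo which $f$ has factorization type $[2,3]$, rather than searching for type $[1,1,1,2]$ --- immaterial); and $T_2=0$ from Newton's identities. The gap is in your final step, exactly the one you delegate to the computer: your proof needs the assertion that the prime ideals dividing $(\Delta_f B_f)$ are precisely $\fp_5$ and $\fp_{61}$, and that assertion is \emph{false}. Already for the discriminant one has $\Norm_{K/\Q}(\Delta_f)=5^9\cdot 23\cdot 367\cdot 1613\cdot 20101$, so $\Delta_f$ alone is divisible by prime ideals above $23$, $367$, $1613$ and $20101$; the class-by-class factors are no better, e.g. $\Norm_{K/\Q}(B_{[2,3]})=5^9\cdot 181\cdot 307\cdot 167449$ and $\Norm_{K/\Q}(B_{[1,2,2]})=-5^{26}\cdot 61\cdot 70956089917$, and there is also the degree-two prime $\fq_5$ above $5$. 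So the set of primes to which Th\'eor\`eme~\ref{T:thm1} applies is much smaller than the complement of $\lbrace\fp_5,\fp_{61}\rbrace$, and your argument says nothing about the many remaining prime divisors.

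The point you are missing is that the hypothesis $\Delta_fB_f\not\equiv 0\pmod{\fp}$ of Th\'eor\`eme~\ref{T:thm1} is sufficient but not necessary for the equivalence \eqref{(2.17)}: at a prime dividing $\Delta_fB_f$ the equivalence may still hold, but this must be checked directly, by computing both sides, namely $N_\fp(f)$ (a factorization of $f$ modulo $\fp$) and $T_{\Norm(\fp)+1}$ modulo $\fp$. This case-by-case verification is the bulk of the paper's proof: for instance $N_{\fq_5}(f)=5$ and $T_{26}\in\fq_5$ (equivalence holds); at the prime $\fp$ above $181$ dividing $B_{[2,3]}$ one finds $N_{\fp}(f)=3$ and $T_{182}\equiv 57\pmod{\fp}$ (equivalence holds, both sides false); and similarly for every prime ideal dividing one of the $B_{\calC}$ or $\Delta_f$. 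Only through this verification does one discover that exactly two primes violate \eqref{(2.17)}: $\fp_5$, where $f\equiv X^5\pmod{\fp_5}$ gives $N_{\fp_5}(f)=1$ while $T_6=0$, and $\fp_{61}$, where $N_{\fp_{61}}(f)=1$ while $T_{62}\in\fp_{61}$. Without this step your proof stalls the moment the factorization reveals the additional prime divisors, since Th\'eor\`eme~\ref{T:thm1} gives you no information whatsoever about them.
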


\section{D\'emonstration du th\'eor\`eme~\ref{T:thm1}}
Posons  $\widetilde{K}=K(\alpha_1,\cdots,\alpha_k)$ et notons $O_{\widetilde{K}}$ son anneau d'entiers. Rappelons que $G= \Gal(\widetilde{K}/K)$.

Le lemme qui suit ne d\'epend pas de l'hypoth\`ese faite sur  $B_f$.

 \begin{lemme} \label{L:lemme2} Supposons $N_{\fp}(f)=k$. Alors, on a  $T_{\Norm(\fp)+1}\equiv T_2\pmod {\fp}.$
 \end{lemme}
 
% \begin{proof}     Parce que $\fp$ ne divise pas $\Delta_f$ et que $N_{\fp}(f)=k$,  $\fp$ est totalement d\'ecompos\'e dans  $\widetilde{K}$ (\cite{Cohen}, p.  79 et Prop. 2.3.9). Soit $\fP$ un id\'eal premier de $O_{\widetilde{K}}$ au-dessus de $\fp$. 
%La substitution de Frobenius $\Frob_{\fP/\fp}\in G$ est donc  l'identit\'e. Autrement dit, pour tout $i=1,\cdots,k$, on a 
% $$\alpha_i=\Frob_{\fP/\fp}(\alpha_i)\equiv \alpha_i^{\Norm \fp}\pmod{\fP}.$$
% D'apr\`es l'\'egalit\'e \eqref{(2.4)}, on a 
% $$T_{\Norm(\fp)+1}=\alpha_1^{\Norm(\fp)+1}+\cdots+\alpha_k^{\Norm(\fp)+1},$$
% d'o\`u 
% $$T_2= \alpha_1^2+\cdots +\alpha_k^2 \equiv T_{\Norm(\fp)+1} \pmod{\fP}.$$
% Cela implique l'assertion car $T_2$ et $T_{\Norm(\fp)+1}$ sont dans $O_K$.
% \end{proof} 

\begin{proof}
Par hypoth\`ese, il existe $k$ \'el\'ements $\beta_1\dots\beta_k$ de $O_K$ tels que $f\equiv\prod_{i=1}^k(X-\beta_i)\pmod\fp$. Si $(V_n)_{n\in\N}$ est la suite des sommes de Newton des $\beta_i$, on en d\'eduit que pour tout $n\in\N$ on a $T_n\equiv V_n\pmod\fp$. En cons\'equence, on a
\[T_{\Norm(\fp)+1}\equiv V_{\Norm(\fp)+1}\equiv V_2\equiv T_2\pmod\fp,\]
la congruence centrale venant de ce que l'on a $x^{\Norm(\fp)+1}\equiv x^2\pmod\fp$ pour tout $x\in O_K$.
\end{proof}

Inversement, supposons   $T_{\Norm(\fp)+1}\equiv T_2 \pmod{\fp}$ et montrons que l'on a $N_{\fp}(f)=k$. 
 
L'id\'eal   $\fp$ est non ramifi\'e dans $\widetilde{K}$  car il  ne divise pas $\Delta_f$. Soient $\fP$ un id\'eal premier de $O_{\widetilde{K}}$ au-dessus de $\fp$ et   $\sigma\in G$ la substitution de Frobenius en $\fP$.  Pour tout $i\in\{1,\cdots,k\}$, on a 
$$\sigma(\alpha_i)=\alpha_{\sigma(i)}\quad \text{et}\quad \alpha_i^{\Norm(\fp)}\equiv \alpha_{\sigma(i)} \pmod{\fP}.$$
On a donc dans  $O_{\widetilde{K}}$ la congruence (\'egalit\'e \eqref{(2.1)})
 $$T_{\Norm(\fp)+1}\equiv    \sum_{i=1}^k\alpha_i \alpha_{\sigma(i)} \pmod {\fP}.$$
Il en r\'esulte que l'on a 
$$\sum_{i=1}^k\alpha_i \alpha_{\sigma(i)}\equiv T_2 \pmod {\fP}.$$
V\'erifions que $\sigma$ est distinct de l'identit\'e. Supposons le contraire. Dans ce cas, on  d\'eduit    des \'egalit\'es  ~\eqref{(2.2)} et   \eqref{(2.3)} que   $\fP$ divise $B_f$. 
Parce que $B_f$  est   dans $O_K$ (lemme~\ref{L:lemme1}), on a donc 
 $$B_f\equiv 0 \pmod {\fp},$$
 ce qui contredit la condition~\eqref{(2.4)}, et prouve  notre assertion. 
Par suite,  $\fp$ est 
 totalement d\'ecompos\'e dans $\widetilde{K}$ ou ce qui revient au m\^eme dans $K$, d'o\`u  $N_{\fp}(f)=k$ (\cite{Cohen},  Prop. 2.3.9). 
 
 Cela  \'etablit l'\'equivalence~\eqref{(2.5)}  et  le th\'eor\`eme~\ref{T:thm1}.
 
\section{D\'emonstration du th\'eor\`eme~\ref{T:thm2}}
Consid\'erons d\'esormais un \'el\'ement  $\sigma\in G$, distinct de l'identit\'e. On va d\'emontrer que l'on a
\begin{equation}
  \label{(4.1)}
\sum_{i=1}^k \alpha_i\alpha_{\sigma(i)}\neq T_2,
\end{equation}
ce qui prouvera le r\'esultat. 
Pour cela,  on est amen\'e  \`a distinguer plusieurs cas en fonction de la d\'ecomposition de $\sigma$ en produit de cycles \`a supports disjoints. Dans chacun des cas, on va supposer que l'on a
\begin{equation}   \label{(4.2)} 
\sum_{i=1}^k \alpha_i\alpha_{\sigma(i)}=T_2
\end{equation}
  et aboutir \`a une contradiction.

 \begin{lemme}  \label{L:lemme3} Supposons   qu'il existe  une transposition dans la d\'ecomposition de $\sigma$ en produit de cycles \`a supports disjoints. Alors, la condition \eqref{(4.1)} est satisfaite.
 \end{lemme}

 \begin{proof}  
Soit $(\alpha_{i_1},\alpha_{i_2})$ une transposition intervenant dans la d\'ecomposition de $\sigma$ en produit de cycles \`a supports disjoints. 

Supposons  que l'on ait $\sigma=(\alpha_{i_1},\alpha_{i_2})$. On a les \'egalit\'es 
\begin{equation}
\label{(4.3)}
\sum_{i=1}^k \alpha_i\alpha_{\sigma(i)}=2\alpha_{i_1}\alpha_{i_2}+T_2-\alpha_{i_1}^2-\alpha_{i_2}^2=-(\alpha_{i_1}-\alpha_{i_2})^2+T_2.
\end{equation}
La condition \eqref{(4.2)} implique alors $\alpha_{i_1}=\alpha_{i_2}$ et une contradiction.

On a donc $\sigma\neq (\alpha_{i_1},\alpha_{i_2})$.  
Il  existe ainsi   dans la d\'ecomposition de $\sigma$ un cycle 
$$(\alpha_{i_3},\cdots,\alpha_{i_t})$$
de longueur au moins $2$, dont le support est disjoint de $\lbrace \alpha_{i_1},\alpha_{i_2}\rbrace$.
D'apr\`es la condition \eqref{(4.2)}, on a donc une \'egalit\'e de la forme
\begin{equation}
\label{(4.4)} 
2\alpha_{i_1}\alpha_{i_2}+(\alpha_{i_3}\alpha_{i_4}+\cdots+\alpha_{i_{t-1}}\alpha_{i_t}+\alpha_{i_t}\alpha_{i_3})+P(\cdots,\alpha_j,\cdots)=T_2,
\end{equation}
 o\`u $P$ est un polyn\^ome homog\`ene de degr\'e $2$ en les $\alpha_j$ avec $j\neq i_1,i_2,i_3,\cdots,i_t.$

Appliquons le $3$-cycle  $(i_1,i_2,i_3)\in \A_k\subseteq G$ aux deux membres de l'\'egalit\'e \eqref{(4.4)}. On a $t\geq 4$ et 
on obtient  ainsi
\begin{equation}
 \label{(4.5)}
2\alpha_{i_2}\alpha_{i_3}+(\alpha_{i_1}\alpha_{i_4}+\cdots+\alpha_{i_{t-1}}\alpha_{i_t}+\alpha_{i_t}\alpha_{i_1})+P(\cdots,\alpha_j,\cdots)=T_2.
\end{equation}
Par soustraction des \'egalit\'es  \eqref{(4.4)} et \eqref{(4.5)}, on a donc
$$2\alpha_{i_2}(\alpha_{i_1}-\alpha_{i_3})+\alpha_{i_4}(\alpha_{i_3}-\alpha_{i_1})+\alpha_{i_t}(\alpha_{i_3}-\alpha_{i_1})=0.$$
On a $\alpha_{i_1}\neq \alpha_{i_3}$, d'o\`u 
$$2\alpha_{i_2}=\alpha_{i_4}+\alpha_{i_t}.$$
En appliquant de nouveau le $3$-cycle  $(i_1,i_2,i_3)\in G$ aux deux membres de cette \'egalit\'e, on obtient  
$$2\alpha_{i_3}=\alpha_{i_4}+\alpha_{i_t},$$
d'o\`u   $\alpha_{i_2}=\alpha_{i_3}$, puis une contradiction
 et le r\'esultat.
 \end{proof}  

   \begin{lemme}  \label{L:lemme4} Soit $p\geq 4$ un entier. Supposons  qu'il existe   un $p$-cycle  dans la d\'ecomposition de $\sigma$ en produit de cycles \`a supports disjoints. 
Alors, la condition \eqref{(4.1)} est satisfaite.
  \end{lemme}

 \begin{proof}  Soit 
 $(\alpha_{i_1},\alpha_{i_2},\cdots,\alpha_{i_p})$ un $p$-cycle intervenant dans la d\'ecomposition de $\sigma$ en produit de cycles \`a supports disjoints. 
On a alors une expression de la forme
\begin{equation}
 \label{(4.6)}
\alpha_{i_1}\alpha_{i_2}+\alpha_{i_2}\alpha_{i_3}+\alpha_{i_3}\alpha_{i_4}+\cdots+\alpha_{i_p}\alpha_{i_1}+P(\cdots,\alpha_j,\cdots)=T_2,
\end{equation}
o\`u $P$ est un polyn\^ome homog\`ene de degr\'e $2$ en les $\alpha_j$ avec  $j\not\in \lbrace    i_1,i_2,\cdots,i_p\rbrace.$
En appliquant le $3$-cycle $(i_1,i_2,i_3)\in G$ aux deux membres de \eqref{(4.6)}, on obtient  
\begin{equation}
 \label{(4.7)}
\alpha_{i_2}\alpha_{i_3}+\alpha_{i_3}\alpha_{i_1}+\alpha_{i_1}\alpha_{i_4}+\cdots+\alpha_{i_p}\alpha_{i_2}+P(\cdots,\alpha_j,\cdots)=T_2.
 \end{equation}
   Par soustraction  de \eqref{(4.6)} et \eqref{(4.7)}, on a donc 
\begin{equation}
 \label{(4.8)}
\alpha_{i_1}(\alpha_{i_2}-\alpha_{i_3}+\alpha_{i_p}-\alpha_{i_4})+\alpha_{i_3}\alpha_{i_4}-\alpha_{i_p}\alpha_{i_2}=0. 
\end{equation}

1) Si $p=4$, on a alors
$$\alpha_{i_1}(\alpha_{i_2}-\alpha_{i_3})+\alpha_{i_4}(\alpha_{i_3}-\alpha_{i_2})=0,$$ 
d'o\`u  $\alpha_{i_2}=\alpha_{i_3}$ ou  $\alpha_{i_1}=\alpha_{i_4}$ et une contradiction. 

2) Si $p\geq 5$, en appliquant le $3$-cycle $(i_2,i_3,i_4)\in G$ \`a l'\'egalit\'e \eqref{(4.8)}, cela conduit \`a
\begin{equation}
 \label{(4.9)}
\alpha_{i_1}(\alpha_{i_3}-\alpha_{i_4}+\alpha_{i_p}-\alpha_{i_2})+\alpha_{i_4}\alpha_{i_2}-\alpha_{i_p}\alpha_{i_3}=0.
\end{equation}
Par soustraction de \eqref{(4.8)} et \eqref{(4.9)},  il vient
 $$\alpha_{i_1}(2\alpha_{i_2}-2\alpha_{i_3})+\alpha_{i_4}(\alpha_{i_3}-\alpha_{i_2})+\alpha_{i_p}(\alpha_{i_3}-\alpha_{i_2})=0.$$
On a $\alpha_{i_2}\neq \alpha_{i_3}$, d'o\`u 
$$2\alpha_{i_1}=\alpha_{i_4}+\alpha_{i_p}.$$
 En appliquant  le $3$-cycle $(i_1,i_2,i_3)\in  G$ \`a cette \'egalit\'e, on obtient
 $$2\alpha_{i_2}=\alpha_{i_4}+\alpha_{i_p},$$ 
puis $\alpha_{i_1}=\alpha_{i_2}$, d'o\`u une contradiction  et le r\'esultat.
  \end{proof}

Compte tenu des lemmes~\ref{L:lemme3} et \ref{L:lemme4}, il reste \`a \'etablir l'\'enonc\'e suivant.

   \begin{lemme}  \label{L:lemme5} Supposons   que $\sigma$ soit un produit de  $3$-cycles  \`a supports disjoints. Alors, la condition \eqref{(4.1)} est satisfaite.
  \end{lemme}

  \begin{proof}    
  1) Supposons que $\sigma$ soit un $3$-cycle. Posons
$\sigma=(\alpha_{i_1},\alpha_{i_2},\alpha_{i_3})$. Toutes les racines de $f$ autres  que $\alpha_{i_1},\alpha_{i_2},\alpha_{i_3}$ \'etant fix\'ees par $\sigma$, on a
\begin{equation}
 \label{(4.10)}
\alpha_{i_1}\alpha_{i_2}+\alpha_{i_2}\alpha_{i_3}+\alpha_{i_3}\alpha_{i_1}+\sum_{j\not\in \lbrace i_1,i_2,i_3\rbrace} \alpha_j^2=T_2.
\end{equation}
On a $k\geq 4$ et en  appliquant le cycle $(i_2,i_3,i_4)\in G$ aux deux membres de cette \'egalit\'e, on  obtient
 \begin{equation}
  \label{(4.11)}
\alpha_{i_1}\alpha_{i_3}+\alpha_{i_3}\alpha_{i_4}+\alpha_{i_4}\alpha_{i_1}+\alpha_{i_2}^2+\sum_{j\not\in \lbrace i_1,i_2,i_3,i_4\rbrace} \alpha_j^2=T_2.
\end{equation}
Par soustraction  de \eqref{(4.10)}  et  \eqref{(4.11)},  il vient
$$\alpha_{i_1}(\alpha_{i_2}-\alpha_{i_4})+\alpha_{i_3}(\alpha_{i_2}-\alpha_{i_4})+\alpha_{i_4}^2-\alpha_{i_2}^2=0.$$
On a $\alpha_{i_2}\neq\alpha_{i_4}$, d'o\`u 
$$\alpha_{i_1}+\alpha_{i_3}=\alpha_{i_2}+\alpha_{i_4}.$$
En appliquant le $3$-cycle $(i_1,i_2,i_3)$ aux deux membres de cette \'egalit\'e, on obtient
$$\alpha_{i_2}+\alpha_{i_1}=\alpha_{i_3}+\alpha_{i_4},$$
puis
$\alpha_{i_3}-\alpha_{i_2}=\alpha_{i_2}-\alpha_{i_3},$
d'o\`u  $\alpha_{i_2}=\alpha_{i_3}$ et une contradiction.

2) Supposons que $\sigma$ soit un  produit de au moins deux $3$-cycles \`a supports disjoints 
$$(\alpha_{i_1},\alpha_{i_2},\alpha_{i_3})\quad \text{et}\quad (\alpha_{i_4},\alpha_{i_5},\alpha_{i_6}).$$
  On a donc  une expression de la forme
  \begin{equation}
    \label{(4.12)}
  \alpha_{i_1}\alpha_{i_2}+\alpha_{i_2}\alpha_{i_3}+\alpha_{i_3}\alpha_{i_1}+\alpha_{i_4}\alpha_{i_5}+\alpha_{i_5}\alpha_{i_6}+\alpha_{i_6}\alpha_{i_4}+P(\cdots,\alpha_j,\cdots)=T_2,
  \end{equation}
 o\`u $P$ est un polyn\^ome homog\`ene de degr\'e $2$ en les $\alpha_j$ avec  $j\not\in \lbrace i_1,i_2,i_3,i_4,i_5,i_6\rbrace$. 
En appliquant le $3$-cycle $(i_2,i_3,i_4)\in G$ aux deux membres de \eqref{(4.12)}, on obtient   
  \begin{equation}
     \label{(4.13)} 
\alpha_{i_1}\alpha_{i_3}+\alpha_{i_3}\alpha_{i_4}+\alpha_{i_4}\alpha_{i_1}+\alpha_{i_2}\alpha_{i_5}+\alpha_{i_5}\alpha_{i_6}+\alpha_{i_6}\alpha_{i_2}+P(\cdots,\alpha_j,\cdots)=T_2.
  \end{equation}
   Par soustraction de  \eqref{(4.12)}  et  \eqref{(4.13)}, on a donc
 $$\alpha_{i_1}(\alpha_{i_2}-\alpha_{i_4})+\alpha_{i_3}(\alpha_{i_2}-\alpha_{i_4})+\alpha_{i_5}(\alpha_{i_4}-\alpha_{i_2})+\alpha_{i_6}(\alpha_{i_4}-\alpha_{i_2})=0.$$
On a $\alpha_{i_2}\neq \alpha_{i_4}$, d'o\`u 
 $$\alpha_{i_1}+\alpha_{i_3}=\alpha_{i_5}+\alpha_{i_6}.$$
En appliquant le $3$-cycle $(i_1,i_2,i_3)$ aux deux membres, il vient 
$$\alpha_{i_2}+\alpha_{i_1}=\alpha_{i_5}+\alpha_{i_6},$$
d'o\`u $\alpha_{i_3}=\alpha_{i_2}$. On a ainsi une contradiction et le r\'esultat.
\end{proof}    

Cela termine la d\'emonstration du th\'eor\`eme~\ref{T:thm2}.

\section{D\'emonstration de la proposition~\ref{P:prop1}}
 
Pour tout $\sigma\in G$, $\sigma\neq 1$,  on a 
\begin{equation}
 \label{(5.1)} 
c(\sigma)=c\left(\sigma^{-1}\right).
\end{equation}

D'apr\`es l'hypoth\`ese faite, pour tout $\sigma\in \calC$ on a $\sigma\neq \sigma^{-1}$. 
 Soit $X$ un syst\`eme de repr\'esentants des  paires $\lbrace \sigma, \sigma^{-1}\rbrace$ lorsque 
 $\sigma$ parcourt $\calC$ i.e. $X$ est une partie de $\calC$ contenant exactement un \'el\'ement des paires 
 $\lbrace \sigma, \sigma^{-1}\rbrace$ pour $\sigma\in \calC$. 
Posons
 $$x=\prod_{s\in X} c(s).$$
 D'apr\`es l'\'egalit\'e~\eqref{(5.1)}, le produit des $c(s)$ pour $s\in X$ ne d\'epend pas du syst\`eme de repr\'esentants $X$ choisi et on~a 
 \begin{equation}
 \label{(5.2)}
B_{\calC}=x^2.
\end{equation} 
 Par ailleurs, soit $\tau$ un \'el\'ement de $G$. Pour tout $s\in X$, on a $\tau\left(c(s)\right)=c\left(\tau s \tau^{-1}\right)$.  De plus, quand $s$ parcourt $X$, $\tau s\tau^{-1}$ 
 parcourt un autre syst\`eme de repr\'esentants. On a donc $\tau(x)=x$, par suite  $x$ est dans $O_K$, d'o\`u la premi\`ere assertion. 
 
 En ce qui concerne la seconde assertion, en posant $\calC=\lbrace s_1,\cdots,s_r\rbrace$, on a $\calC'=\lbrace s_1^{-1},\cdots,s_r^{-1}\rbrace$. Les \'egalit\'es~ \eqref{(2.7)} et~\eqref{(5.1)} impliquent alors le r\'esultat.
 
  \section{D\'emonstration du th\'eor\`eme~\ref{T:thm3}}
 
  Soit $\calS$ une classe de conjugaison non triviale de $\sym_k$. 
 
 \begin{lemme} \label{L:lemme6} Le polyn\^ome $F_{\calS}$ est invariant sous l'action de $\sym_k$ 
 \end{lemme}
 
 \begin{proof} Elle est analogue \`a  celle du lemme~\ref{L:lemme1}. En effet, soit $\tau\in \sym_k$. D'apr\`es \eqref{(2.10)}, on a les \'egalit\'es 
$$\tau.F_{\calS}=\prod_{t\in \calS}\left( \sum_{i=1}^k X_{\tau(i)} X_{\tau t(i)}-N_2\right)= \prod_{t\in \calS} \left(\sum_{j=1}^k X_j X_{\tau t \tau^{-1}(j)}-N_2\right).$$
 L'application de $\calS$ dans $\calS$ qui \`a $t$ associe $\tau t \tau^{-1}$ \'etant une bijection, on a donc   $\tau.F_{\calS}=F_{\calS}$.
 \end{proof} 
 
Rappelons que l'on a 
\begin{equation}
\label{(6.1)}
F_{\calS}(\alpha_1,\cdots,\alpha_k)=\prod_{\sigma\in \calS}\left(\sum_{i=1}^k \alpha_i\alpha_{\sigma(i)} -T_2\right).
\end{equation}
 Comme cons\'equence directe du lemme~\ref{L:lemme6}, on en d\'eduit l'\'enonc\'e qui suit i.e. la condition \eqref{(2.11)}.

\begin{corollary} \label{C:cor2}  L'\'el\'ement $F_{\calS}(\alpha_1,\cdots,\alpha_k)$ est dans $O_K$. 
\end{corollary}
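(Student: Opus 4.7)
Le plan est d'utiliser le lemme~\ref{L:lemme6} conjointement au th\'eor\`eme fondamental sur les polyn\^omes sym\'etriques. Puisque $F_{\calS}\in \Z[X_1,\cdots,X_k]$ est invariant sous l'action naturelle de $\sym_k$ d'apr\`es le lemme~\ref{L:lemme6}, ce th\'eor\`eme  garantit l'existence d'un polyn\^ome $P\in \Z[Y_1,\cdots,Y_k]$ tel que
$$F_{\calS}(X_1,\cdots,X_k)=P(e_1,\cdots,e_k),$$
o\`u $e_1,\cdots,e_k$ d\'esignent les polyn\^omes sym\'etriques \'el\'ementaires en $X_1,\cdots,X_k$.

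L'\'etape suivante consiste \`a \'evaluer cette identit\'e en $(\alpha_1,\cdots,\alpha_k)$. Les formules de Vi\`ete appliqu\'ees au polyn\^ome unitaire $f\in O_K[X]$ montrent que chaque $e_i(\alpha_1,\cdots,\alpha_k)$ co\"incide, au signe pr\`es, avec l'un des coefficients $a_{k-i}$ de $f$, et appartient donc \`a $O_K$. On en d\'eduit que
$$F_{\calS}(\alpha_1,\cdots,\alpha_k)=P\bigl(e_1(\alpha_1,\cdots,\alpha_k),\cdots,e_k(\alpha_1,\cdots,\alpha_k)\bigr)$$
est obtenu en \'evaluant un polyn\^ome \`a coefficients dans $\Z$ en des \'el\'ements de $O_K$, ce qui fournit le r\'esultat.

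Une alternative plus directe serait d'observer que $F_{\calS}(\alpha_1,\cdots,\alpha_k)$ est d'une part un entier alg\'ebrique (expression polyn\^omiale \`a coefficients entiers en les entiers alg\'ebriques $\alpha_i$), et d'autre part fix\'e par $G=\Gal(\widetilde K/K)$ d'apr\`es le lemme~\ref{L:lemme6} joint \`a l'identit\'e $\sigma(\alpha_i)=\alpha_{\sigma(i)}$~; par suite $F_{\calS}(\alpha_1,\cdots,\alpha_k)\in K\cap \Zbar=O_K$. Je ne m'attends \`a aucun obstacle substantiel~: toute la difficult\'e \'etait concentr\'ee dans le lemme~\ref{L:lemme6}, d\'ej\`a \'etabli, et le reste combine des r\'esultats classiques.
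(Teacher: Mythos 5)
Votre d\'emonstration est correcte et suit essentiellement la m\^eme voie que l'article~: celui-ci \'etablit la condition~\eqref{(2.11)} en combinant le lemme~\ref{L:lemme6} avec le th\'eor\`eme fondamental des polyn\^omes sym\'etriques \`a coefficients dans $\Z$ et les formules de Vi\`ete, ce qui est exactement votre argument principal. Votre variante (entier alg\'ebrique fix\'e par $G$, donc dans $K\cap\Zbar=O_K$) est \'egalement valable et reprend d'ailleurs le sch\'ema de la preuve du lemme~\ref{L:lemme1} de l'article.
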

 
 \begin{lemme} \label{L:lemme7}  Soit $\calS_0$  la classe de conjugaison des transpositions de $\sym_k$.  On a
\begin{equation}
\label{(6.2)}
F_{\calS_0}(\alpha_1,\cdots,\alpha_k)=(-1)^{\frac{k(k-1)}{2}}\Delta_f.
\end{equation}
 \end{lemme}
 
  \begin{proof} Il  y a $k(k-1)/2$ transpositions dans $\sym_k$ et on a (cf.~\eqref{(4.3)})
$$F_{\calS_0}(\alpha_1,\cdots,\alpha_k)=(-1)^{\frac{k(k-1)}{2}} \prod_{i<j} (\alpha_i-\alpha_j)^2=(-1)^{\frac{k(k-1)}{2}} \Delta_f.$$
  \end{proof} 
Le th\'eor\`eme~\ref{T:thm3} se d\'eduit comme suit.  D'apr\`es l'\'egalit\'e~\eqref{(6.1)}, on a 
$$\prod_{\calS} F_{\calS}(\alpha_1,\cdots,\alpha_k)=\prod_{\sigma\in \sym_k, \sigma\neq 1} \left(\sum_{i=1}^k \alpha_i\alpha_{\sigma(i)} -T_2\right).$$
On d\'eduit alors de~\eqref{(2.7)}-\eqref{(2.9)}  et du corollaire~\ref{C:cor2} que $B_f$ divise dans $O_K$ le produit des $F_{\calS}(\alpha_1,\cdots,\alpha_k)$,  d'o\`u  les  conditions \eqref{(2.12)} et \eqref{(2.13)}. Le th\'eor\`eme~\ref{T:thm1} et le lemme~\ref{L:lemme7}  entra\^inent alors le r\'esultat.

\section{Les \'el\'ements  \texorpdfstring{$F_{\calS}(\alpha_1,\cdots,\alpha_k)$  pour $k\in \lbrace 3,4\rbrace$ \label{S:par7}}{Fs pour k=3,4}}
Pour toute classe de conjugaison non triviale $\calS$ de $\sym_k$, posons par commodit\'e
\begin{equation}
\label{(7.1)}
R_{\calS}=F_{\calS}(\alpha_1,\cdots,\alpha_k).
\end{equation}
Si $\calS_0$ est la classe de conjugaison des transpositions, le  lemme~\ref{L:lemme7} permet de d\'eterminer simplement $R_{\calS_0}$. 
Dans le cas o\`u $k\in \lbrace 3,4\rbrace$ et    $\calS\neq \calS_0$, on   exprime ci-dessous $R_{\calS}$ 
en  un  polyn\^ome 
des fonctions   sym\'etriques \'el\'ementaires $e_1,\cdots,e_k$ des  racines $\alpha_i$ de $f$,
$$e_1=\sum_{i=1}^k \alpha_i,\quad e_2=\sum_{i<j} \alpha_i\alpha_j,\  \cdots .$$

\subsection{Cas o\`u \texorpdfstring{$k=3$}{k=3}} 
Soit  $\calS_1$ la classe des $3$-cycles de $\sym_3$. On  a les \'egalit\'es 
$$\alpha_1\alpha_2+\alpha_2\alpha_3+\alpha_3\alpha_1-T_2=\alpha_1\alpha_3+\alpha_2\alpha_1+\alpha_3\alpha_2-T_2=-e_1^2 + 3e_2,$$
d'o\`u 
\begin{equation}
\label{(7.2)}
R_{\calS_1}=(e_1^2-3e_2)^2.
\end{equation}

 \subsection{Cas o\`u \texorpdfstring{$k=4$}{k=4}} 
On v\'erifie avec {\tt Magma}   les \'egalit\'es suivantes.

Soit   $\calS_1$ la classe des doubles transpositions de $\sym_4$. On a 
\begin{equation}
\label{(7.3)}
R_{\calS_1}=-e_1^6 + 8e_1^4e_2 - 4e_1^3e_3 - 20e_1^2e_2^2 + 24e_1^2e_4 + 8e_1e_2e_3 + 16e_2^3
    - 64e_2e_4 + 8e_3^2.
\end{equation}   
Soit   $\calS_2$ la classe des $3$-cycles. En posant 
$$g=3e_3e_1^5 - (e_2^2 +3e_4)e_1^4 - 19e_3e_2e_1^3 + (6e_2^3 + 16e_4e_2 + 8e_3^2)e_1^2$$$$ + (30e_3e_2^2 + 8e_3e_4)e_1 -9e_2^4 - 24e_4e_2^2 - 24e_3^2e_2 - 16e_4^2,$$
on a 
\begin{equation}
\label{(7.4)}
R_{\calS_2}=g^2.
\end{equation} 
Soit   $\calS_3$ la classe des $4$-cycles. On a 
\begin{equation}
\label{(7.5)}
R_{\calS_3}=(e_1^6 - 8e_1^4e_2 + e_1^3e_3 + 21e_1^2e_2^2 - 3e_1^2e_4 - 3e_1e_2e_3 -
        18e_2^3 + 8e_2e_4 + e_3^2)^2.
\end{equation}

 \section{D\'emonstrations des propositions~\ref{P:prop2},  \ref{P:prop3}, \ref{P:prop4}, \ref{P:prop5} et \ref{P:prop6} }
 Reprenons sans autre pr\'ecision les notations utilis\'ees dans le paragraphe~\ref{S:par7}.
 
 \subsection{La proposition 2}   1) Supposons $k=2$. On a 
 $e_1=a_1$ et $e_2=-a_0,$
d'o\`u 
 $$T_2=\alpha_1^2+\alpha_2^2=(\alpha_1+\alpha_2)^2-2\alpha_1\alpha_2=e_1^2-2e_2=a_1^2+2a_0.$$
 Par ailleurs, on a
 $$B_f=2\alpha_1\alpha_2-T_2=-(\alpha_1-\alpha_2)^2=-\Delta_f,$$
 d'o\`u l'assertion (th\'eor\`eme~\ref{T:thm1}). 
 
2) Supposons $k=3$. On a $e_1=a_2$, $e_2=-a_1$ et $e_3=a_0$.   On a donc
$$T_2=a_2^2+2a_1.$$
D'apr\`es  le lemme~\ref{L:lemme7} et l'\'egalit\'e~\eqref{(7.2)}, on obtient
$$R_{\calS_0}R_{\calS_1}=-\Delta_f(a_2^2+3a_1)^2.$$
Le  th\'eor\`eme~\ref{T:thm3}  entra\^ine alors l'assertion. 
 
  \subsection{La proposition 3} On a 
 $$e_1=e_2=0 \quad e_3=a_1,\quad e_4=-a_0\quad \text{et}\quad T_2=0.$$
 D'apr\`es  le lemme~\ref{L:lemme7} et les \'egalit\'es~\eqref{(7.3)}-\eqref{(7.5)}, on a  
$$R_{\calS_0}R_{\calS_1}R_{\calS_2}R_{\calS_3} =2^{11}\Delta_fe_3^6e_4^4,$$
d'o\`u le r\'esultat (th\'eor\`eme~\ref{T:thm3}).

 \subsection{La proposition 4}  Posons $A=\Z[X]/(p,f)$. C'est un $\F_p$-espace vectoriel de dimension $k$. Notons $\Tr_{A/\F_p}$ la forme trace. On utilisera \`a de nombreuses reprises l'\'equivalence~\eqref{(1.2)} ainsi que   l'\'egalit\'e
 \begin{equation}
\label{(8.1)}
 \Tr_{A/\F_p}\left(x^{p+1}\right)=T_{p+1}+p\Z. 
 \end{equation}
 Rappelons que pour  chaque  exemple  consid\'er\'e   dans l'\'enonc\'e de cette proposition, on a $G=\sym_k$. Par ailleurs, on a 
 $T_2=0$. 
 
 1) On a $\Delta_f=19.151$.
 En d\'eterminant  une approximation  num\'erique des racines de $f$, on v\'erifie que l'on a
{\small{$$B_f=-2^4.5^{19}.7^2.19.151.467^2.761^2.2477.$$}}
Si $p\in \lbrace 19,151\rbrace$, $f$ n'est pas s\'eparable modulo $p$, donc $N_p(f)\neq 5$. On a $T_{20}=9$ 
et en  utilisant l'\'egalit\'e~\eqref{(8.1)}, on v\'erifie   que
$T_{152}\equiv 74\pmod{151}$. L'\'equivalence annonc\'ee est donc vraie dans ce cas. 

Soit $p$ un diviseur premier de $B_f$ distinct de $2,5,19,151$. En utilisant l'\'equivalence~\eqref{(1.2)}, on constate que l'on a $N_p(f)\neq 5$. Par ailleurs, on a
$$T_8=4,\quad T_{468}\equiv 250\pmod{467}, \quad T_{762}\equiv 355\pmod{761}\quad \text{et}\quad T_{2478}\equiv 695\pmod{2477}.$$
En particulier, on a $T_{p+1}\not \equiv 0 \pmod p$, 
 d'o\`u  le r\'esultat (th\'eor\`eme~\ref{T:thm1}). 
 
 On  notera que l'on a $T_3=T_6=0$ et $N_2(f)=N_5(f)=0$, donc les nombres premiers $2$ et $5$ doivent \^etre exclus.

2) On a $\Delta_f=67.743$.
La d\'emarche utilis\'ee pour d\'emontrer la premi\`ere assertion  permet \`a  nouveau de conclure. En d\'eterminant  une approximation  num\'erique des racines de $f$, on v\'erifie que la d\'ecomposition de $B_f$ en produit de nombres premiers est donn\'ee par l'\'egalit\'e

{\small{
$$B_f=2^{72}.3^4.7^2.13^2.41^2.47^5.67.79^3.281^2.347^2.743.1151^4.1283^2.1319^2.1663^2.951697.1395487^2.$$
$$6367393^2.17122219.136254761^2.57785936129^2.123530989187^2.885187290897569369^2.$$}}

En appliquant  l'\'equivalence~\eqref{(1.2)} et l'\'egalit\'e~\eqref{(8.1)} avec chaque diviseur premier de $B_f$, on obtient  le r\'esultat annonc\'e.

3) On a $\Delta_f=-776887$. Dans ce cas, en proc\'edant comme ci-dessus, on constate que l'entier $B_f$  poss\`ede 1723 chiffres d\'ecimaux et  nous ne sommes pas parvenus \`a  le factoriser directement.  Afin d'obtenir cette factorisation, on a utilis\'e l'\'egalit\'e~\eqref{(2.9)}. Pour chaque classe de conjugaison $\calS$ de $\sym_7$, on a donc
explicit\'e l'entier $B_{\calS}$. Pour chaque classe $\calS$, on adopte la notation  $B_{\calS}=B_v$, dans laquelle $v=[\ell_1,\cdots,\ell_t]$ est un vecteur dont les composantes 
sont les longueurs des cycles intervenant dans la d\'ecomposition 
des \'el\'ements de $\calS$ en produit de cycles \`a supports disjoints. On obtient les r\'esultats suivants :

{\small{
$$B_{[7]}=\left(2^{25}. \ 7^{66}. \ 10962571225722870541309365904873297427\right)^2,$$
$$B_{[1,6]}=\left(7^{63}. \ 5087. \ 615078503681.\ p.q\right)^2,$$
o\`u
$$ p=2315322299227184940410117, \quad q=103180663032729967322136080457913269014828964041,$$
$$B_{[2,5]}=\left(3^3.\  7^{42}.54401205406822254534362466407.  \ 1246534314610754363757777242593\right)^2,$$
$$B_{[3,4]}=\left(31.\ 58435252078103192479377043961\right)^4,$$
$$B_{[1,1,5]}=\left(3.\  7^{21}. \ 107. \ 109622861.\ 314517883. \ 87453951749. \ 96257721299. \ 473705767399763\right)^2,$$
$$B_{[1,2,4]}=\left(2^{21}. \ 7^{42}. \ 29^2. \ 107. \  15961129. \ 24534049. \ 198331229. \ 671794760853523.  \ p.q\right)^2,$$
$$\text{o\`u} \quad p=93177762039493501,\quad q=10900667110067270212049432531,$$
$$B_{[1,3,3]}=\left(2^{28}.\ 7^{7}. \  1085687 \right)^4,$$
$$B_{[2,2,3]}=\left(31.\ 13132283. \ 161620073077054859. \ 183574845951173009\right)^2,$$
$$B_{[1,1,1,4]}=\left(4936189. \  725938918439654319174389 \right)^2,$$
$$B_{[1,1,2,3]}=\left(7^{42}.\ 32717. \ 43670581. \  4063646878656760059708736369066517857\right)^2,$$
$$B_{[1,2,2,2]}=-7^{21}.\ 761. \ 7679513. \ 25839993284328785428639,$$
$$B_{[1,1,1,1,3]}=7^{28},$$
$$B_{[1,1,1,2,2]}=-2^{21}.\ 17.\ 191.\ 5087. \ 15031. \ 28627874657408393618159298227,$$
$$B_{[1,1,1,1,1,2]}=776887.$$
}}

Pour $p=776887$, qui est au signe pr\`es le discriminant de $f$, on a $T_{p+1}\equiv 115287\pmod p$.  En ce qui concerne les autres diviseurs  premiers des $B_{[v]}$,
l'\'equivalence~\eqref{(1.2)} et l'\'egalit\'e~\eqref{(8.1)} permettent alors d'obtenir le r\'esultat.

Notons que pour $p=15961129$, on a $N_p(f)=1$ et $T_{p+1}\equiv 0 \pmod p$, donc ce nombre premier est \`a exclure.
Par ailleurs, on constate que,  conform\'ement \`a la proposition~\ref{P:prop1},  les entiers $B_{\calS}$ pour lesquels $\calS$ n'est pas form\'ee d'\'el\'ements d'ordre $2$ sont des carr\'es. 
Cela termine la d\'emonstration de la proposition~\ref{P:prop4}.

 \subsection{La proposition 5} On a $\Delta_f=47^2$ et  $T_2=0$, $f$ est irr\'eductible sur $\Q$ et  $\Gal(f)$ est di\'edral d'ordre $10$  (cf.  \cite{lmfdb}). On v\'erifie que l'on a
$$\prod_{\calS}F_{\calS}(\alpha_1,\alpha_2,\alpha_3,\alpha_4,\alpha_5)=-3^2.5^{10}.11^4.13^4.19^2.23^3.41^2.47^3.281^2,$$
 o\`u $\calS$ parcourt l'ensemble des classes de conjugaison non triviales de $\sym_5$.  D'apr\`es la condition~\eqref{(2.12)} l'ensemble  des diviseurs premiers de $B_f$ 
 est donc contenu dans 
 $$\lbrace 3, 5, 11, 13, 19, 23, 41, 47, 281\rbrace.$$
 Le th\'eor\`eme~\ref{T:thm3} entra\^ine alors le r\'esultat.

  \subsection{La proposition 6} On a 
  $$\Delta_f=18441\alpha^2 - 621100\alpha + 1031256.$$
  Il y a deux id\'eaux premiers dans $O_K$ au-dessus de $5$ et on a une \'egalit\'e de la forme  $5O_K=\fp_5\fq_5$,  o\`u le degr\'e r\'esiduel de $\fp_5$ vaut $1$ et o\`u celui de $\fq_5$ vaut $2$. 
  De plus, on a $\fp_5=(u)$.  Il en r\'esulte que $f$ est un polyn\^ome d'Eisenstein en l'id\'eal $\fp_5$. En particulier, $f$ est irr\'eductible sur $K$.  
  
  Par ailleurs, $13$ est inerte dans $K$ et $f$ se d\'ecompose   en deux facteurs irr\'eductibles de degr\'e  2 et 3 modulo $13O_K$. Il en r\'esulte que $\Gal(f)$ contient une transposition. Le polyn\^ome $f$ \'etant  irr\'eductible sur $K$, $\Gal(f)$ contient un $5$-cycle. On en d\'eduit que  $\Gal(f)=\sym_5$. 
  
 Pour chaque classe de conjugaison $\calS$ de $\sym_5$,  on note,  comme dans l'alin\'ea 3 ci-dessus,  $B_{\calS}=B_v$, o\`u  $v=[\ell_1,\cdots,\ell_t]$ est un vecteur indiquant les longueurs des cycles intervenant dans la d\'ecomposition 
des \'el\'ements de $\calS$ en produit de cycles \`a supports disjoints. On obtient les r\'esultats suivants :
{\small{
$$B_{[5]}=(-413075\alpha^2 + 640600 \alpha - 39900)^2,$$
$$B_{[1,4]}=(48990625\alpha^2 - 38715625\alpha - 41359375)^2,$$
$$B_{[2,3]}=(16516\alpha^2 - 187975\alpha + 309206)^2,$$
$$B_{[1, 1, 3]}=(-14041\alpha^2 - 368900\alpha + 619144)^2,$$
$$B_{[1, 2, 2]}=31472487500\alpha^2 - 24902178125\alpha - 26712984375,$$
$$B_{[1, 1, 1, 2]}=\Delta_f.$$
}}

Notons $\Norm_{K/\Q}(B_v)$ la norme de $K$ sur $\Q$ de $B_v$. On v\'erifie que l'on a
{\small{
$$\Norm_{K/\Q}(B_{[5]})=5^{24},\quad \Norm_{K/\Q}(B_{[1,4]})=-5^{32},\quad  \Norm_{K/\Q}(B_{[2,3]})=5^9.181.307.167449,$$
$$ \Norm_{K/\Q}(B_{[1, 1, 3]})=5^9.2707.15639581,\quad \Norm_{K/\Q}(B_{[1, 2, 2]})=-5^{26}.61.70956089917,$$
$$\Norm_{K/\Q}(\Delta_f)=5^9.23.367.1613.20101.$$
 }}

Pour chacun des  id\'eaux premiers de $O_K$   divisant  les $B_v$, il  s'agit alors de v\'erifier s'ils satisfont ou non  l'\'equivalence \eqref{(2.17)}. D'apr\`es les \'egalit\'es pr\'ec\'edentes, mis \`a part les id\'eaux premiers $\fp_5$ et $\fq_5$ au-dessus de $5$, ils sont tous de degr\'e r\'esiduel $1$. 
 
 En ce qui concerne  $\fp_5$ et $\fq_5$, on a $\Norm(\fp_5)=5$ et $\Norm(\fq_5)=25$. On constate que $T_6=0$ et que $T_{26}$  appartient \`a  $\fq_5$. Par ailleurs, on a $\alpha\equiv 2 \pmod{\fp_5}$, d'o\`u $u\in \fp_5$ et  $f$ modulo $\fp_5$ est $X^5$. On a donc $N_{\fp_5}(f)=1$ et $\fp_5$ est  \`a exclure. On a $u\equiv 3 \alpha \pmod{\fq_5}$ donc 
$f$ modulo $\fq_5$ est $X^5+(7\alpha+2)X+3\alpha$. On v\'erifie alors que l'on a $N_{\fq_5}(f)=5$, ainsi  $\fq_5$ n'est pas \`a exclure. 
 
Examinons  l'\'equivalence \eqref{(2.17)} pour les autres id\'eaux premiers de $O_K$ qui divisent $B_{[2,3]}$.  Il existe un unique id\'eal premier $\fp$ de $O_K$ de degr\'e  $1$ au-dessus de $181$.  On a $\alpha\equiv 30 \pmod {\fp}$, d'o\`u l'on d\'eduit que $T_{182}\equiv 57 \pmod {\fp}$ et $N_{\fp}(f)=3$, donc  $\fp$ n'est pas exclure.  Par ailleurs,  il y a trois id\'eaux premiers de degr\'e $1$ au-dessus de $307$ et un seul  divise $B_{[2,3]}$. Notons-le $\fq$. On a $\alpha\equiv 100 \pmod{\fq}$, d'o\`u $T_{308}\equiv 255 \pmod{\fq}$ et $N_{\fq}(f)=1$, ainsi $\fq$ n'est pas \`a exclure. De m\^eme,  il y a trois id\'eaux premiers de degr\'e $1$ au-dessus de $167449$. En notant $\fq$ celui qui divise  $B_{[2,3]}$, on a $\alpha\equiv 30636 \pmod{\fq}$, d'o\`u $T_{167450}\equiv 51379\pmod{\fq}$, $N_{\fq}(f)=2$  et la m\^eme conclusion.
 
 Les autres diviseurs premiers  des $B_v$ se tra\^itent de fa\c con analogue. 
 
 Pour l'unique id\'eal premier $\fp_{61}$ de $O_K$ de degr\'e $1$ au-dessus de $61$, on a $\alpha\equiv 57\pmod{\fp_{61}}$, d'o\`u $T_{62}\in \fp_{61}$ et $N_{\fp_{61}}(f)=1$, donc $\fp_{61}$ ne satisfait pas l'\'equivalence \eqref{(2.17)}, d'o\`u le r\'esultat.
 
\section{Remarque sur les id\'eaux premiers principaux d'un corps de nombres \label{S:par9}}
Consid\'erons un corps de nombres $K$.  
 Les id\'eaux premiers non nuls de $O_K$ qui sont principaux sont exactement ceux qui sont totalement d\'ecompos\'es dans le corps de classes de Hilbert $H_K$ de $K$ (\cite{Neu}, (8.5) Corollary, p. 107).
Connaissant $H_K$, les r\'esultats pr\'ec\'edent permettent donc d'obtenir  un crit\`ere caract\'erisant les id\'eaux premiers principaux de $O_K$. 

Soit $f\in O_K[X]$ le  polyn\^ome minimal d'un \'el\'ement primitif entier de l'extension $H_K/K$.   Soit $(T_n)_{n\in \N}$ la suite d'\'el\'ements de $O_K$ associ\'ee \`a $f$ par   l'\'egalit\'e~\eqref{(2.1)}.

 On se limirera ici \`a expliciter deux exemples illustrant cette situation. 
 
 1)  Prenons $K=\Q\left(\sqrt{-5}\right)$ dont le nombre de classes vaut $2$.  On a $H_K=K\left(\sqrt{-1}\right)$, d'o\`u $f=X^2+1$ et $\Delta_f=-20$. Il en r\'esulte que pour tout id\'eal premier $\fp$ de $O_K$,  on a $N_{\fp}(f)=2$ si et seulement si $\fp$ est principal ({\it{loc. cit.}}  et \cite{Cohen}, Prop. 2.3.9). Par suite, 
 \begin{equation}
 \label{(9.1)} 
\fp\ \text{est principal}  \   \Leftrightarrow \  \Norm(\fp)\equiv 1 \pmod 4.
  \end{equation}
  
On peut retrouver  ce fait   en remarquant que  la  suite $(T_n)_{n\in \N}$   associ\'ee \`a $f$ est d\'efinie par  
$$T_n=2\quad  \text{si}\ n\equiv 0 \pmod 4,\quad T_n=-2\quad  \text{si}\ n\equiv 2 \pmod 4,\quad T_n=0\quad  \text{si}\ n\equiv 1 \pmod 2.$$
D'apr\`es la proposition~\ref{P:prop2},  pour tout   id\'eal premier $\fp$ de $O_K$,  on a donc 
$N_{\fp}(f)=2$ si et seulement si $\Norm(\fp)\equiv 1 \pmod 4$, d'o\`u l'\'equivalence~\eqref{(9.1)}.

 2) Prenons  $K=\Q(\beta)$ avec  $\beta^4+7\beta^2-2\beta+14=0.$
 Le groupe de Galois sur $\Q$ de  la cl\^oture galoisienne de $K$ est $\A_4$. 
 Le groupe des classes de $K$ est cyclique d'ordre $4$ i.e. on a  $\Gal\left(H_K/K\right)=\Z/4\Z.$  En utilisant  {\tt Magma},  on constate que le polyn\^ome  
$f=X^4-(\beta^2+3)X^2-1$
convient. La suite $(T_n)_{n\in \N}$ associ\'ee \`a $f$ est d\'efinie par les \'egalit\'es
$$T_0=4,\quad T_1=0,\quad T_2=2(\beta^2+3),\quad T_3=0,
\quad T_{n+4}=(\beta^2+3)T_{n+2}+T_n\quad  \text{pour tout}\ n\in \N.$$
 
 \begin{lemme} \label{L:lemme8}
 Soit $\fp$ un id\'eal premier de $O_K$ de caract\'eristique r\'esiduelle impaire. Alors,
 \begin{equation}
  \label{(9.2)} 
\fp\ \text{est principal}  \   \Leftrightarrow \  T_{\Norm(\fp)+1}\equiv 2(\beta^2+3) \pmod {\fp}.
  \end{equation}
\end{lemme}

\begin{proof} Il y a deux id\'eaux premiers $\fp_1$ et $\fp_2$ de $O_K$ au-dessus de $2$, chacun d'indice de ramification $2$,  et deux id\'eaux premiers de $O_K$ au-dessus de $5$ dont un seul $\fp_5$ est de degr\'e r\'esiduel $1$. On v\'erifie que l'on a
 \begin{equation}
  \label{(9.3)}
\Delta_fO_K=\fp_1^8\fp_2^{16}\fp_5^4.
  \end{equation}
  Par ailleurs, on a 
  $$e_1=0,\quad e_2=\beta^2+3,\quad e_3=0, \quad e_4=-1.$$
 Les seuls \'el\'ements d'ordre $4$ de $\sym_4$ sont les $4$-cycles. On d\'eduit alors du th\'eor\`eme~\ref{T:thm3} que $B_f$ divise l'entier $R_{\calS_3}$ d\'efini par l'\'egalit\'e~\eqref{(7.5)}.
On a 
$$R_{\calS_3}=4e_2^2(9e_2^2+4)^2=-9\beta^2+18\beta-41.$$
On constate que l'on a 
$$(-9\beta^2+18\beta-41)O_K=\fp_2^4\fq,$$
o\`u $\fq$ est l'id\'eal premier de $O_K$ au-dessus de $80233$ de degr\'e r\'esiduel $1$. D'apr\`es le  th\'eor\`eme~\ref{T:thm1}, 
pour tout id\'eal premier $\fp$ de $O_K$, distinct de $\fp_1$, $\fp_2$, $\fp_5$ et $\fq$, l'\'equivalence~\eqref{(9.2)} est satisfaite.

Les id\'eaux $\fp_1$ et $\fp_2$ ne sont pas principaux. On a les \'egalit\'es $\Norm(\fp_1)=\Norm(\fp_2)=2$ et $T_3=0$,  donc pour ces deux id\'eaux $\fp_i$ on a 
$T_{\Norm(\fp_i)+1}\equiv 2(\beta^2+3) \pmod {\fp_i}$.
L'id\'eal $\fp_5$ n'est pas principal et  on a $T_6\not\equiv 2(\beta^2+3)\pmod{\fp_5}$.
Par ailleurs, $\fq$ est principal et on a $T_{\Norm(\fq)+1}\equiv 2(\beta^2+3) \pmod {\fq}$,  d'o\`u le r\'esultat.
\end{proof}

\begin{remark} Par exemple, l'id\'eal premier $\fp_{13}$ de $O_K$ au-dessus de $13$ de degr\'e 1 est principal, on a 
$\fp_{13}=(\beta^3/2 - \beta^2/2 + 2\beta - 4)$ et on v\'erifie que l'on a 
$v_{\fp_{13}}\left(T_{14}-2(\beta^2+3)\right)=1.$ C'est l'id\'eal premier de $O_K$  de plus petite norme qui soit principal.
\end{remark}

\end{document}